\documentclass[12pt]{article}

\usepackage[T1]{fontenc}
\usepackage{lmodern}
\usepackage[margin=1in]{geometry}
\usepackage{microtype}
\usepackage{amsmath,amssymb,amsthm,mathtools}
\usepackage{enumitem}
\usepackage{array}
\usepackage{booktabs}
\usepackage{aliascnt}

\newtheorem{theorem}{Theorem}[section]
\newaliascnt{lemma}{theorem}
\newtheorem{lemma}[lemma]{Lemma}
\aliascntresetthe{lemma}
\newaliascnt{proposition}{theorem}
\newtheorem{proposition}[proposition]{Proposition}
\aliascntresetthe{proposition}
\newaliascnt{corollary}{theorem}
\newtheorem{corollary}[corollary]{Corollary}
\aliascntresetthe{corollary}
\theoremstyle{definition}
\newaliascnt{remark}{theorem}
\newtheorem{remark}[remark]{Remark}
\aliascntresetthe{remark}

\usepackage[capitalise]{cleveref}

\newcommand{\Z}{\mathbb Z}
\newcommand{\R}{\mathbb R}
\newcommand{\T}{\mathbb T}
\newcommand{\one}{\mathbf 1}
\newcommand{\wh}{\widehat}
\newcommand{\ee}{\mathrm e}
\newcommand{\card}[1]{\lvert #1\rvert}

\title{Both directions of Fuglede's conjecture fail in dimension two}
\author{Tao Zhang\thanks{Email: \texttt{zhant220@163.com}.}\\
\small Institute of Mathematics and Interdisciplinary Sciences,\\[-1mm]
\small Xidian University, Xi'an 710126, China}
\date{}

\begin{document}

\maketitle

\begin{abstract}
Fuglede's conjecture asserts that a measurable set of positive and finite measure is spectral if and only if it tiles Euclidean space by translations. Counterexamples are known in every dimension $d\ge3$, whereas the one- and two-dimensional cases have remained unresolved. We construct two explicit $60$-point subsets of the rank-two finite Abelian group $\Z_{60}\times\Z_{12}$: one is a translational tile with no spectrum, and the other is spectral but does not tile. A finite-to-infinite transference principle lifts them to bounded subsets of $\R^2$ that are finite unions of unit squares. Consequently, both implications in Fuglede's conjecture fail in dimension two.

\medskip
\noindent\textit{Keywords:} Fuglede's conjecture, spectral set, translational tile, finite Abelian group, universal spectrum, Fourier basis.

\smallskip
\noindent\textit{MSC 2020:} 42B05, 43A40, 05B25, 20K01, 52C22.
\end{abstract}

\section{Introduction}

Let $\Omega\subset\R^d$ be a bounded measurable set with $0<\lvert\Omega\rvert<\infty$. The set $\Omega$ is called \emph{spectral} if there is a set $\Lambda\subset\R^d$ such that
\[
  \left\{\lvert\Omega\rvert^{-1/2}
  \ee^{2\pi i\langle\lambda,x\rangle}:\lambda\in\Lambda\right\}
\]
is an orthonormal basis of $L^2(\Omega)$. In this case, $\Lambda$ is called a \emph{spectrum} of $\Omega$. The set $\Omega$ is a \emph{translational tile} if there is a discrete set $\mathcal T\subset\R^d$ for which
\[
  \sum_{t\in\mathcal T}\one_\Omega(x-t)=1
\]
for almost every $x\in\R^d$.

Fuglede conjectured in 1974 that these two properties are equivalent \cite{F74}. His theorem proves the equivalence when either the spectrum or the translation set is a lattice, and the conjecture has since linked harmonic analysis with translational tiling, additive combinatorics, and factorization theory. A substantial positive theory survives under geometric or arithmetic hypotheses. In one dimension, the problem is closely tied to factorizations of subsets of the integers and cyclic groups \cite{CM99,L02,LW1997,PW2001}. The conjecture holds for convex planar domains \cite{IKT03}, for convex polytopes in dimension three \cite{GL17}, and, more generally, for convex domains in every dimension \cite{LM21}. Further structural reductions are discussed in \cite{DL2014,K2025,K2026}.

For unrestricted measurable sets, however, the higher-dimensional picture is negative. Tao constructed a spectral set that does not tile in $\R^d$ for $d\ge5$ \cite{T04}. The dimension was subsequently lowered to four by Matolcsi \cite{M05} and to three by Kolountzakis and Matolcsi \cite{KM2006}. In the reverse direction, non-spectral tiles were obtained in dimensions at least five by Kolountzakis and Matolcsi \cite{KM06}, in dimension four by Farkas and R\'ev\'esz \cite{FR06}, and in dimension three by Farkas, Matolcsi, and M\'ora \cite{FMM06}. Thus both implications were known to fail in every dimension $d\ge3$, while the one- and two-dimensional cases remained open.

The known counterexamples are built first in finite Abelian groups and then transferred to lattices and Euclidean space. This strategy makes the number of cyclic factors decisive: a counterexample in
\[
  \Z_{n_1}\times\cdots\times\Z_{n_d}
\]
produces a counterexample in $\Z^d$ and hence in $\R^d$. To reach the plane, one therefore needs a counterexample in a group with only two cyclic factors. This is a restrictive setting. Fuglede's conjecture is known to hold in many finite Abelian groups of small rank, including several finite-field, rank-two, and cyclic families; representative results include \cite{AABF17,FKS2012,IMP17,KMSV20,KMSV2012,KS2021,L02,M21,MK17,S19,S20,Somlai21,Z2022,Z2024,ZhouP2}. Related positive results are also known in non-Euclidean locally compact Abelian groups, for example in $\mathbb{Q}_p$ \cite{FFLS19}.
Recent results further sharpen this positive picture, the universal-spectrum property passes from $G$ to $G\times\Z_p$ when $p\nmid\card{G}$ \cite{ZhouUniversal}. The constructions below cross this rank-two barrier.

Our main result is the following.

\begin{theorem}\label{main-thm}
There exists a bounded translational tile in $\R^2$ that is not spectral, and there exists a bounded spectral set in $\R^2$ that does not tile by translations.
\end{theorem}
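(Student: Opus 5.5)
The plan is to follow the finite-group-to-Euclidean strategy described above, with all the real work done in the rank-two group $G=\Z_{60}\times\Z_{12}$. \textbf{Step 1} is to exhibit two explicit $60$-element sets $A,B\subset G$, and then verify four properties:
\begin{itemize}[nosep]
  \item $A$ tiles $G$, i.e.\ there is $T\subset G$ with $A\oplus T=G$ and $\card{T}=12$;
  \item $A$ has no spectrum in $\wh G\cong G$;
  \item $B$ has a spectrum $\Lambda\subset\wh G$ with $\card{\Lambda}=60$;
  \item $B$ admits no tiling complement.
\end{itemize}
The positive halves are finite certificates. For $A$ one checks that the difference sets satisfy $(A-A)\cap(T-T)=\{0\}$ and that $\card{A}\,\card{T}=\card{G}$. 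For $B$ one checks that $\wh{\one_B}(\lambda-\lambda')=0$ for all distinct $\lambda,\lambda'\in\Lambda$. Vanishing of exponential sums over $\Z_{60}\times\Z_{12}$ can be certified through cyclotomic divisibility, so this reduces to a finite computation.

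\textbf{Step 2} handles the negative halves, which I expect to be the main obstacle. For non-spectrality of $A$, I would compute the zero set
\[
Z_A=\{\xi\in\wh G:\wh{\one_A}(\xi)=0\}
\]
and show that it contains no $60$-clique, meaning no $60$-element set $\Lambda$ with $\Lambda-\Lambda\subset Z_A\cup\{0\}$. The first thing to try is a pigeonhole argument over cosets of a subgroup $H\le\wh G$ that $Z_A$ largely avoids: $\Lambda$ can meet each coset of $H$ in at most a bounded number of points, and summing over cosets forces $\card{\Lambda}<60$. If the pigeonhole bound is not sharp enough, I would fall back on an exhaustive clique search, with symmetry reduction by translation and by automorphisms preserving $Z_A$. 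Non-tiling of $B$ goes the same way. A complement $T$ would need $\card{T}=12$ and $(T-T)\cap(B-B)=\{0\}$, equivalently $\wh{\one_T}$ vanishing on the complement of $Z_B\cup\{0\}$. I would rule this out either by a counting argument on subgroup projections, for instance projecting to $\Z_{12}$ or to $\Z_{5}$ factors and using that tiles of cyclic groups satisfy the Coven--Meyerowitz conditions, or by a finite search.

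\textbf{Step 3} transfers to $\Z^2$ and then to $\R^2$ by standard arguments. For a set $E\subset G$, lift it to $\tilde E\subset\{0,\dots,59\}\times\{0,\dots,11\}\subset\Z^2$ and put $\Omega_E=\tilde E+[0,1)^2$.
\begin{itemize}[nosep]
  \item If $A\oplus T=G$, then $\tilde A\oplus(\tilde T+60\Z\times12\Z)=\Z^2$, so $\Omega_A$ tiles $\R^2$ by translation.
  \item If $\Lambda$ is a spectrum of $B$, then $\Lambda'=\{(\lambda_1/60,\lambda_2/12)\}+\Z^2$ is a spectrum of $\Omega_B$. This follows by factoring the Fourier transform as $\wh{\one_{\tilde B}}\cdot\wh{\one_{[0,1)^2}}$ and counting densities.
\end{itemize}
For the converse directions I would invoke the known transference results (Kolountzakis--Matolcsi, Farkas--Matolcsi--M\'ora):
\begin{itemize}[nosep]
  \item If $\Omega_B$ tiled $\R^2$, then, since it is a union of lattice cubes, one may assume a periodic tiling with translations in $\Z^2$ and period lattice $60\Z\times12\Z$ scaled up. Reducing modulo a suitable multiple then yields a tiling of some $\Z_{60k}\times\Z_{12k}$ by a set whose projection to $G$ is $B$ with multiplicities, and hence a tiling of $G$ by $B$, contrary to Step 2.
  \item If $\Omega_A$ were spectral, one argues that $A$ is spectral in $G$, using that a spectrum of $\Omega_A$ can be taken periodic modulo $\Z^2$, contrary to Step 2.
\end{itemize}
Together these give both assertions of \cref{main-thm}.
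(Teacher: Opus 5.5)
Your overall strategy (a counterexample in a rank-two finite group, then transference) is right, but the proposal has two genuine gaps.

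\textbf{The sets are never constructed.} Steps~1--2 never produce $A$ and $B$. Their existence in a group with two cyclic factors is the entire content of the theorem, so ``exhibit explicit sets, then pigeonhole or run a clique search'' is a search plan, not a proof. What is missing is a mechanism that makes the negative halves provable.

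The paper takes one $12$-point graph $E\subset\Z_{12}^2$. It has three tiling complements $B_0,B_1,B_2$ with no common $12$-point spectrum. It also has three spectra $V_0,V_1,V_2$ with no common tiling complement. Both facts follow from short counting arguments. These are then stacked as layers over $\Z_5$, which costs no rank since $\Z_{12}^2\times\Z_5\cong\Z_{60}\times\Z_{12}$.

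Your coset pigeonhole is then exactly the right tool, with the index-$5$ subgroup $5\Z_{60}\times\Z_{12}\le\wh G$. Suppose one checks that $\wh{\one_{\mathcal A}}(\xi,0)=0$ forces $\wh{\one_{B_j}}(\xi)=0$ for every $j$. Then any $12$ points of a putative spectrum lying in one coset project to a common spectrum of the $B_j$. So each coset holds at most $11$ points, and $\card{\Lambda}\le55$. Dually, the condition $\Z_{12}^2\times(\Z_5\setminus\{0\})\subseteq\mathcal A-\mathcal A$ forces any tiling complement into a single layer. There it would be a common complement of the $V_j$.

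These bounds hold only because the layers were designed for them. Projecting to a cyclic factor and invoking Coven--Meyerowitz would not substitute: the image of a tiling of $G$ under a projection is only a multiplicity covering of the factor.

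\textbf{The converse transference fails for the single-copy lift.} The converse directions in Step~3 are false for the lift $\tilde E\subset[0,n_1)\times[0,n_2)$. For instance, $A=\{(0,0),(1,0)\}\subset\Z_3\times\Z_3$ neither tiles nor is spectral, since $2\nmid 9$. Yet $\tilde A+[0,1)^2=[0,2)\times[0,1)$ tiles $\R^2$ and is spectral. Your two arguments break at identifiable places.
\begin{itemize}
\item Even if a spectrum of $\Omega_A$ is $\Z^2$-periodic, say $\Lambda'+\Z^2$, nothing forces $\Lambda'$ into $(\frac1{n_1}\Z\times\frac1{n_2}\Z)/\Z^2$. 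So it need not descend to $\wh G$. In the example, $\Lambda'=\{(0,0),(\tfrac12,0)\}\not\subset\frac13\Z\times\frac13\Z$.
\item Reducing a periodic tiling modulo $60k\Z\times12k\Z$ and projecting to $G$ only shows that the translates $B+\pi(t)$, $t\in T$, cover each point of $G$ exactly $k^2$ times. Such a multi-tiling does not make $B$ a tile. For example, $\{0,1\}$ tiles $\Z_6$, hence $2$-fold tiles $\Z_3$, but does not tile $\Z_3$.
\end{itemize}

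The remedy is to lift with $k^2$ copies, $E[k]=E+\{(60j_1,12j_2):0\le j_1,j_2<k\}$, with $k$ large. Kolountzakis--Matolcsi's theorem handles spectrality. For tiling, a window argument finds a fundamental cell lying inside every translate that meets it and reads off a tiling of $G$ there. This works once $k>\max\{2,8\cdot60\cdot12/\card{A}\}$, which gives $k=97$ for a $60$-point set. Your forward directions at $k=1$ are fine.
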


The two constructions arise from the same $12$-point graph $E\subset\Z_{12}^2$. On the tiling side, $E$ has three tiling complements with no common spectrum. Placing these complements in five layers produces a $60$-point tile that cannot be spectral. On the spectral side, the same set $E$ has three spectra with no common tiling complement; a parallel five-layer construction produces a $60$-point spectral set that cannot tile. The auxiliary factor $\Z_5$ does not increase the rank, because the Chinese remainder theorem gives
\[
  \Z_{12}^2\times\Z_5\cong\Z_{60}\times\Z_{12}.
\]
The transference result in \Cref{prop:transference} then carries both finite examples to the plane. 

The rest of the paper follows this scheme. \Cref{sec-pre} records the finite Abelian group criteria, proves the transference statement in the required rectangular form, and isolates two layer-amplification principles. \Cref{sec:tile-not-spectral} constructs the non-spectral tile, and \Cref{sec:spectral-not-tile} constructs the spectral set that does not tile. 

\section{Preliminaries}\label{sec-pre}

Let
\[
  G=\Z_{n_1}\times\cdots\times\Z_{n_d}.
\]
We identify the dual group $\wh{G}$ with $G$ through the characters
\[
  \chi_\xi(x)=\exp\left(2\pi i\sum_{j=1}^d\frac{\xi_jx_j}{n_j}\right).
\]
For $A\subseteq G$, define
\[
  \wh{\one_A}(\xi)=\sum_{a\in A}\chi_\xi(a),
  \qquad
  \mathcal Z_A=\{\xi\in G:\wh{\one_A}(\xi)=0\}.
\]
A set $\Lambda\subseteq\wh{G}$ is a spectrum of $A$ if the restrictions to $A$ of the characters indexed by $\Lambda$ form an orthogonal basis of $\mathbb C^A$. Equivalently,
\[
  \card{\Lambda}=\card{A},
  \qquad
  (\Lambda-\Lambda)\setminus\{0\}\subseteq\mathcal Z_A.
\]
Since the corresponding character matrix is square, transposition gives the useful symmetry
\begin{equation}\label{eq:spectral-symmetry}
  \Lambda\text{ is a spectrum of }A
  \quad\Longleftrightarrow\quad
  A\text{ is a spectrum of }\Lambda.
\end{equation}
A set $A$ tiles $G$ if there is a set $B\subseteq G$ such that every $g\in G$ has a unique representation $g=a+b$, with $a\in A$ and $b\in B$. We write
\[
  A+B=G.
\]
Equivalently,
\begin{equation}\label{eq:tile-difference}
  \card{A}\card{B}=\card{G},
  \qquad
  (A-A)\cap(B-B)=\{0\}.
\end{equation}

If $T\subseteq G$ is a tile, a set $S\subseteq\wh{G}$ is a \emph{universal spectrum} for $T$ if it is a spectrum of every tiling complement of $T$. Dually, if $E$ is spectral, a set $C\subseteq G$ is a \emph{universal tiling complement} for $E$ if it tiles with every spectrum of $E$. These notions and the amplification of their failure are closely related to the universal spectrum method of \cite{FMM06,KM06,PW2001}.

For $A\subseteq G=\Z_{n_1}\times\Z_{n_2}$, choose representatives of its elements in
$[0,n_1)\times[0,n_2)$. For $k\ge1$, set
\begin{equation}\label{eq:block-lift}
  Q_k=\{(n_1j_1,n_2j_2):0\le j_1,j_2<k\},
  \qquad
  A[k]=A+Q_k\subset\Z^2.
\end{equation}
A finite set $F\subset\Z^2$ is called spectral in $\Z^2$ if it has a spectrum in $\wh{\Z^2}=\T^2$.

\begin{proposition}[Finite-to-infinite transference]\label{prop:transference}
Let $\varnothing\ne A\subseteq G=\Z_{n_1}\times\Z_{n_2}$ and define $A[k]$ by \eqref{eq:block-lift}.
\begin{enumerate}[label=\textnormal{(\roman*)}]
  \item There exists $k_{\mathrm{sp}}$ such that, for every $k\ge k_{\mathrm{sp}}$, the set $A[k]$ is spectral in $\Z^2$ if and only if $A$ is spectral in $G$.
  \item If $A$ tiles $G$, then $A[k]$ tiles $\Z^2$ for every $k\ge1$. If $A$ does not tile $G$, then $A[k]$ does not tile $\Z^2$ for every integer
  \[
    k>\max\left\{2,\frac{8n_1n_2}{\card{A}}\right\}.
  \]
  \item For every nonempty finite $F\subset\Z^2$, we have
  \[
    F\text{ is spectral in }\Z^2
    \quad\Longleftrightarrow\quad
    F+[0,1)^2\text{ is spectral in }\R^2,
  \]
  and
  \[
    F\text{ tiles }\Z^2
    \quad\Longleftrightarrow\quad
    F+[0,1)^2\text{ tiles }\R^2.
  \]
\end{enumerate}
\end{proposition}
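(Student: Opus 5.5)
The plan is to prove the three items separately, starting with (iii). Item (iii) is essentially the classical cube-lifting step of Kolountzakis and Matolcsi.

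For the spectral equivalence in (iii), I use the product structure. If $\Lambda\subset\T^2$ is a spectrum of $F$, then, lifting $\Lambda$ to $[0,1)^2$, the set $\Lambda+\Z^2$ is a spectrum of $F+[0,1)^2$. This is because the exponentials factor as $\ee^{2\pi i\langle\lambda+m,f+y\rangle}$ with $f\in F$ and $y\in[0,1)^2$, and $\Z^2$ is a spectrum of $[0,1)^2$. Conversely, $\wh{\one_{F+[0,1)^2}}(\xi)=\wh{\one_F}(\xi)\,\wh{\one_{[0,1)^2}}(\xi)$, and the second factor vanishes exactly on $\{\xi:\xi_j\in\Z\setminus\{0\}\text{ for some }j\}$. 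From a spectrum of the union of cubes I would then extract, by the standard periodization argument of Kolountzakis and Matolcsi, a spectrum of $F$ in $\T^2$.

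For tiling in (iii), one direction is immediate: $F+T=\Z^2$ gives $(F+[0,1)^2)+T=\R^2$. For the converse I would use the sliding argument. In a tiling of $\R^2$ by a finite union of lattice-aligned unit squares, the translates in each horizontal strip can be shifted so that all fractional parts agree. Doing this first in one coordinate and then in the other yields a tiling set in $\Z^2+c$, and translating by $-c$ gives a tiling of $\Z^2$ by $F$.

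For (i), put $L=n_1\Z\times n_2\Z$, so that $A[k]=A+Q_k$, where $Q_k$ is a $k\times k$ box in $L$. If $\Lambda\subseteq G$ is a spectrum of $A$, I take
\[
\Lambda'=\{(\lambda_1/n_1,\lambda_2/n_2)\}+\{(m_1/(kn_1),m_2/(kn_2)):0\le m_j<k\}.
\]
Since $\wh{\one_{A[k]}}=\wh{\one_A}\cdot\wh{\one_{Q_k}}$, a direct factorization check shows that $\Lambda'$ is a spectrum of $A[k]$. For the converse, suppose $A[k]$ has a spectrum $\Lambda_k\subset\T^2$ for infinitely many $k$. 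Then $\Lambda_k-\Lambda_k\setminus\{0\}$ lies in the zero set of $\wh{\one_A}\cdot D_k$, where $D_k$ is the product Dirichlet kernel for $Q_k$. For large $k$ the density of $\Lambda_k$ is $\card{A}k^2$, and by pigeonhole some translate of $\Lambda_k$ meets a $k$-refined cell structure in a way that forces $\card{A}$ points whose pairwise differences are not zeros of $D_k$. These differences must then be zeros of $\wh{\one_A}$ viewed as a trigonometric polynomial on $\T^2$. Passing to a limit along $k$ and using the compactness of $\T^2$, together with the fact that the near-lattice zeros of $\wh{\one_A}$ approach points of $(\frac1{n_1}\Z\times\frac1{n_2}\Z)/\Z^2$, I obtain $\card{A}$ elements of $G$ whose pairwise differences lie in $\mathcal Z_A$, that is, a spectrum. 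Once failure for one large $k$ is shown to propagate, this gives a threshold $k_{\mathrm{sp}}$.

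For (ii), the forward direction is immediate. If $A+B=G$, then $A+B+L=\Z^2$ and $Q_k+kL=L$, so $A[k]+(B+kL)=\Z^2$. For the converse, let $A[k]+T=\Z^2$ and reduce modulo $L$. The pushforward of $\one_{A[k]}$ is $k^2\one_A$. Averaging over large boxes gives a nonnegative density $\nu$ on $G$ with $\one_A*\nu\equiv$ const, but this is only a fractional tiling. The main obstacle is upgrading it to a genuine tiling, and this is where the bound $k>\max\{2,8n_1n_2/\card{A}\}$ enters. I would first try a local rigidity argument. Each copy $t+A[k]$ contains an interior region of $k-2$ by $k-2$ full periods of $A$. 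The hypothesis on $k$ makes these interiors large compared with the total boundary layer, which has size at most about $4kn_1n_2$. Hence some period cell $g+[0,n_1)\times[0,n_2)$ is covered entirely by interior parts of tiles. Inside such a cell, each covering tile restricts to a translate of $A$ modulo $L$, so the set of translates $t\bmod L$ used there yields $B\subseteq G$ with $A+B=G$, a contradiction.
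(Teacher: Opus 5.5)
Your plan is correct in outline, and most of it is the paper's proof. The lifts $\Lambda[k]$, $B+kL$ and $\Lambda+\Z^2$ are the paper's forward constructions. The spectral converse in (iii) is cited from Kolountzakis--Matolcsi, as in the paper. Your local-rigidity argument for (ii) is the paper's argument.

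Three places need sharpening.
\begin{itemize}
\item In (ii), the union bound must run over cell positions. The bad set of a tile $\sigma$ is $\{x: P_x\cap(\sigma+R_k)\ne\varnothing,\ P_x\not\subseteq\sigma+R_k\}$. It contains positions \emph{outside} the window, so it is not the frame of your $(k-2)\times(k-2)$ interior. The paper bounds it by the ring $\mathcal R_k$ of size $8kn_1n_2$, which is where the $8$ in the hypothesis comes from. The exact set has fewer than $4kn_1n_2$ points, so your count survives once it is attached to the right set.
\item Also in (ii), you need that distinct tiles meeting $P_x$ have distinct residues mod $L$, since otherwise their traces on $P_x$ coincide. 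This is what makes $A+B_x=G$ direct.
\item In (i), whose converse the paper only cites, the mechanism is not that zeros of $\wh{\one_A}$ approach the lattice. Write $\{s\}=s-\lfloor s\rfloor$ and pigeonhole $\Lambda_k\subset[0,1)^2$ into the $k^2$ classes given by $\bigl(\lfloor k\{n_1\xi_1\}\rfloor,\lfloor k\{n_2\xi_2\}\rfloor\bigr)$. Single boxes of side $1/(kn_j)$ would only give about $\card{A}/(n_1n_2)$ points. Within a class, $n_j(\xi_j-\xi_j')=M_j+\delta_j$ with $M_j\in\Z$ and $\lvert\delta_j\rvert<1/k$. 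So no difference is a Dirichlet zero, and each difference lies coordinatewise within $1/(kn_j)$ of $(M_1/n_1,M_2/n_2)$. By continuity, and because only finitely many lattice values occur, for $k$ beyond a threshold depending only on $A$ this forces $\wh{\one_A}(M)=0$, hence $M\ne0$. The integer parts $(\lfloor n_1\xi_1\rfloor,\lfloor n_2\xi_2\rfloor)$ of $\card{A}$ such points then form a spectrum of $A$. This gives $k_{\mathrm{sp}}$ directly, with no limit and no propagation step.
\end{itemize}

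The genuinely different step is the converse tiling transfer in (iii). The paper slices: for a generic $x$ it intersects the tiling with $x+\Z^2$ and reads off the integer tiling set $b_x(t)=m_t+\varepsilon_x(t)$ in one stroke. That uses only local finiteness of $\mathcal T$ and the fact that countably many translate boundaries form a null set.

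Your sliding argument also works, but ``shifting the translates in each horizontal strip'' hides the lemma that drives it. On almost every horizontal line, the traces of the tiles are half-open unit intervals partitioning the line, so their left endpoints are \emph{already} congruent modulo $1$. Hence each tile's own fractional part $\{t_1\}$ is the common value on every line it meets. Replacing $t_1$ by $\lfloor t_1\rfloor$ is therefore a rigid shift on each such line and preserves the tiling. Repeating vertically gives $F+\{\lfloor t\rfloor:t\in\mathcal T\}=\Z^2$, with the floor taken coordinatewise.

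Sliding thus yields a canonical integer tiling set and shows the original tiling arises from it by line-wise shifts. The cost is a one-dimensional rigidity lemma, two Fubini passes, and the same local-finiteness input, which you should prove. Slicing is shorter and needs no rigidity lemma.
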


\begin{proof}
Part~(i) is the two-dimensional case of \cite[Theorem~4.1]{KM06}. We record the explicit forward construction, which is valid for every $k\ge1$ and will be used later. If $\Lambda\subseteq\wh{G}$ is a spectrum of $A$, then
\begin{equation}\label{eq:explicit-spectrum-lift}
  \Lambda[k]
  =\left\{
  \left(\frac{\lambda_1+s_1/k}{n_1},
        \frac{\lambda_2+s_2/k}{n_2}\right):
  \lambda\in\Lambda,\ 0\le s_1,s_2<k
  \right\}\subset\T^2
\end{equation}
is a spectrum of $A[k]$. Indeed, the Fourier sum over $A[k]=A+Q_k$ factors into the Fourier sum over $A$ and two geometric sums. If two frequencies have different $s$-indices, one of the geometric sums vanishes; if their $s$-indices agree, orthogonality reduces to that of $\Lambda$ on $A$. The converse is the nontrivial part of \cite[Theorem~4.1]{KM06}: for all sufficiently large $k$, a spectrum of $A[k]$ yields, after the frequency-cell decomposition used there and projection to $\wh G$, a spectrum of $A$.

For the first assertion in part~(ii), suppose that $A+B=G$. Choosing representatives of $B$ in $[0,n_1)\times[0,n_2)$ gives the explicit periodic direct tiling
\[
  A[k]+\bigl(B+(kn_1\Z)\times(kn_2\Z)\bigr)=\Z^2.
\]

We prove the converse in a quantitative form. This is the rectangular version of \cite[Proposition~2.5]{M05}. Put
\[
  L=n_1\Z\times n_2\Z,
  \qquad
  P=([0,n_1)\times[0,n_2))\cap\Z^2,
  \qquad
  R_k=([0,kn_1)\times[0,kn_2))\cap\Z^2.
\]
Then $P$ is a complete set of representatives for $\Z^2/L$, and
\[
  A[k]=(A+L)\cap R_k,
  \qquad
  \card{A[k]}=\card{A}\,k^2.
\]
Fix an integer $k\ge3$ for which
\begin{equation}\label{eq:alpha-k}
  \alpha_k:=\frac{8n_1n_2}{\card{A}\,k}<1,
\end{equation}
and suppose, toward a contradiction, that $A[k]$ tiles $\Z^2$ with translation set $\Sigma$.

For an integer $N>\max\{kn_1,kn_2\}$, let
\[
  C_N=[0,N)^2\cap\Z^2,
  \qquad
  \Sigma_N=\{\sigma\in\Sigma:(\sigma+A[k])\cap C_N\ne\varnothing\}.
\]
The translates indexed by $\Sigma_N$ are pairwise disjoint. If one of them meets $C_N$, it is contained in
\[
  [-kn_1,N+kn_1)\times[-kn_2,N+kn_2),
\]
so counting lattice points gives
\begin{equation}\label{eq:sigma-count}
  \card{\Sigma_N}
  \le
  \frac{(N+2kn_1)(N+2kn_2)}{\card{A}\,k^2}.
\end{equation}
Define the boundary ring
\[
  \mathcal R_k=
  \bigl([-n_1,(k+1)n_1)\times[-n_2,(k+1)n_2)\bigr)
  \setminus
  \bigl([n_1,(k-1)n_1)\times[n_2,(k-1)n_2)\bigr),
\]
where both rectangles are intersected with $\Z^2$. Since the outer and inner rectangles have side-length factors $k+2$ and $k-2$, respectively,
\begin{equation}\label{eq:ring-count}
  \card{\mathcal R_k}
  =\bigl((k+2)^2-(k-2)^2\bigr)n_1n_2
  =8kn_1n_2.
\end{equation}
Let
\[
  D_N=([0,N-n_1)\times[0,N-n_2))\cap\Z^2.
\]
By \eqref{eq:sigma-count}--\eqref{eq:ring-count},
\begin{align*}
  \card{\bigcup_{\sigma\in\Sigma_N}(\sigma+\mathcal R_k)}
  \le \card{\Sigma_N}\card{\mathcal R_k}\le \alpha_k (N+2kn_1)(N+2kn_2).
\end{align*}
Because $\alpha_k<1$, one may choose $N$ so large that
\begin{equation}\label{eq:uncovered-corner-ineq}
  \alpha_k (N+2kn_1)(N+2kn_2)
  <(N-n_1)(N-n_2)=\card{D_N}.
\end{equation}
Choose
\[
  x\in D_N\setminus\bigcup_{\sigma\in\Sigma_N}(\sigma+\mathcal R_k)
\]
and set $P_x=x+P$. Then $P_x\subset C_N$.

Let
\[
  I_x=\{\sigma\in\Sigma:(\sigma+A[k])\cap P_x\ne\varnothing\}.
\]
Since $P_x\subset C_N$, every $\sigma\in I_x$ belongs to $\Sigma_N$. Moreover, $A[k]\subset R_k$, so $(\sigma+R_k)\cap P_x\ne\varnothing$. Coordinatewise overlap of these two rectangles places $x$ in the outer rectangle defining $\sigma+\mathcal R_k$. Since $x\notin\sigma+\mathcal R_k$, it lies in the corresponding inner rectangle; hence
\begin{equation}\label{eq:cell-inside-window}
  P_x\subseteq\sigma+R_k
  \qquad(\sigma\in I_x).
\end{equation}
It follows from $A[k]=(A+L)\cap R_k$ that, on $P_x$,
\begin{equation}\label{eq:truncated-periodic-agreement}
  (\sigma+A[k])\cap P_x=(\sigma+A+L)\cap P_x
  \qquad(\sigma\in I_x).
\end{equation}

Define
\[
  B_x=\{\sigma+L:\sigma\in I_x\}\subset\Z^2/L.
\]
First, distinct elements of $I_x$ have distinct residues modulo $L$. Indeed, if $\sigma-\sigma'\in L$, then $\sigma+A+L=\sigma'+A+L$. Choosing a point of $(\sigma+A[k])\cap P_x$ and using \eqref{eq:cell-inside-window}--\eqref{eq:truncated-periodic-agreement} for both translations would place that point in two distinct translates of the tiling, a contradiction.

Now let $g\in\Z^2/L$, and let $z_g\in P_x$ be its unique representative. The tiling of $\Z^2$ supplies a unique $\sigma\in I_x$ with $z_g\in\sigma+A[k]$. By \eqref{eq:truncated-periodic-agreement}, this gives a representation of $g$ in $A+B_x$. Conversely, two representations of $g$ in $A+B_x$ would, again by \eqref{eq:cell-inside-window}--\eqref{eq:truncated-periodic-agreement}, place $z_g$ in two translates $\sigma+A[k]$; uniqueness of the original tiling then forces the two translations and the two elements of $A$ to coincide. Thus
\[
  A+B_x=\Z^2/L\cong G
\]
is a direct factorization. Therefore, if $A$ does not tile $G$, no such tiling of $A[k]$ can exist whenever \eqref{eq:alpha-k} holds. This proves part~(ii).

The spectral equivalence in part~(iii) is \cite[Theorem~4.2]{KM06}. In particular, if $\Lambda\subset\T^2$ is a spectrum of $F$, then $\Lambda+\Z^2$ is a spectrum of $F+[0,1)^2$.

It remains to prove the tiling equivalence. The forward implication is immediate: if $F+B=\Z^2$, then
\[
  (F+[0,1)^2)+B=\R^2
\]
is a tiling. Conversely, suppose that $\Omega=F+[0,1)^2$ tiles $\R^2$ with translation set $\mathcal T$. The set $\mathcal T$ is locally finite. Indeed, fix $f_0\in F$. If distinct $t,t'\in\mathcal T$ satisfied $\lVert t-t'\rVert_\infty<1$, then the open unit squares
\[
  t+f_0+(0,1)^2
  \quad\text{and}\quad
  t'+f_0+(0,1)^2
\]
would overlap in positive measure, contradicting the almost-everywhere disjointness of the tiling. Hence $\mathcal T$ is uniformly separated, and therefore countable and locally finite.

Let $N_0$ be a null set outside which the tiling identity holds, and put
\[
  \mathcal B=N_0\cup\bigcup_{t\in\mathcal T}\partial(t+\Omega).
\]
This is a null set. Choose
\begin{equation}\label{eq:generic-x}
  x\in[0,1)^2\setminus
  \bigcup_{m\in\Z^2}(\mathcal B-m).
\end{equation}
Then every point of $x+\Z^2$ avoids both the exceptional set and all translate boundaries. Write each $t\in\mathcal T$ uniquely as
\[
  t=m_t+\alpha_t,
  \qquad
  m_t\in\Z^2,
  \quad
  \alpha_t\in[0,1)^2,
\]
and define $\varepsilon_x(t)\in\{0,1\}^2$ by
\[
  (\varepsilon_x(t))_j=
  \begin{cases}
    1,&x_j<(\alpha_t)_j,\\
    0,&x_j>(\alpha_t)_j.
  \end{cases}
\]
The equality case is excluded by \eqref{eq:generic-x}. A coordinatewise check gives
\begin{equation}\label{eq:lattice-slice}
  (t+\Omega)\cap(x+\Z^2)
  =x+\bigl(F+m_t+\varepsilon_x(t)\bigr).
\end{equation}
Consequently, for every $n\in\Z^2$, the point $x+n$ belongs to exactly one translate $t+\Omega$, and \eqref{eq:lattice-slice} says that $n$ belongs to exactly one of the sets
\[
  F+b_x(t),
  \qquad
  b_x(t):=m_t+\varepsilon_x(t).
\]
The map $t\mapsto b_x(t)$ is injective: if two distinct translations had the same image, then for any $f\in F$ the point $x+f+b_x(t)$ would belong to both translates, contrary to the choice of $x$. Hence
\[
  F+\{b_x(t):t\in\mathcal T\}=\Z^2
\]
is a direct translational tiling. This completes the proof.
\end{proof}

\begin{remark}\label{rem:transference-consequence}
The spectral statements in \Cref{prop:transference} are \cite[Theorems~4.1 and~4.2]{KM06}. The finite-to-lattice non-tiling argument is the rectangular analogue of \cite[Proposition~2.5]{M05}, with the explicit sufficient bound $k>8n_1n_2/\card A$. The two-way transference is also summarized in \cite[Theorems~1--2 and Remark~1]{FMM06}.
\end{remark}

\subsection{Layer-amplification principles}\label{subsec:layer-amplification}

Let $H$ be a finite Abelian group and let $q\ge2$. For $X\subseteq H\times\Z_q$, write
\[
  X_t=\{h\in H:(h,t)\in X\}
  \qquad(t\in\Z_q)
\]
for its $H$-layers.

\begin{lemma}\label{lem:layer-common-spectrum}
Let $B_0,\ldots,B_{r-1}\subseteq H$ be $m$-point sets, let $\jmath:\Z_q\to\{0,\ldots,r-1\}$ be surjective, and define
\[
  \mathcal A=\bigcup_{t\in\Z_q}(B_{\jmath(t)}\times\{t\}).
\]
Assume that a set $E\subseteq H$ satisfies $E+B_j=H$ for every $j$. Then
\[
  (E\times\{0\})+\mathcal A=H\times\Z_q.
\]
If, in addition,
\begin{equation}\label{eq:layer-zero-implication}
  \wh{\one_{\mathcal A}}(\xi,0)=0
  \quad\Longrightarrow\quad
  \wh{\one_{B_j}}(\xi)=0\ \text{ for every }j,
\end{equation}
and the sets $B_0,\ldots,B_{r-1}$ have no common $m$-point spectrum, then $\mathcal A$ is not spectral.
\end{lemma}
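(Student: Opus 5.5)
The plan has two independent parts: the tiling identity is a fibrewise check, and non-spectrality follows by pigeonholing a putative spectrum of $\mathcal A$ onto a single $\Z_q$-level.

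\emph{Tiling.} Fix $(h,t)\in H\times\Z_q$. Every element of $(E\times\{0\})+\mathcal A$ with last coordinate $t$ has the form $(e+b,t)$, where $e\in E$ and $b\in B_{\jmath(t)}$, because the second summand is the only one carrying a nonzero $\Z_q$-coordinate. So representations of $(h,t)$ correspond bijectively to representations $h=e+b$ with $e\in E$ and $b\in B_{\jmath(t)}$. Since $E+B_{\jmath(t)}=H$ is a direct factorization, such a representation exists and is unique. This gives $(E\times\{0\})+\mathcal A=H\times\Z_q$. Note that surjectivity of $\jmath$ is not needed for this part.

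\emph{Non-spectrality.} Suppose, toward a contradiction, that $\Lambda\subseteq H\times\Z_q$ is a spectrum of $\mathcal A$. Then $\card{\Lambda}=\card{\mathcal A}=qm$.
\begin{enumerate}[label=\textnormal{(\alph*)}]
  \item Partition $\Lambda$ according to the second coordinate into the $q$ classes $\Lambda^{(s)}=\{\lambda\in\Lambda:\lambda_2=s\}$. By pigeonhole, some class $\Lambda^{(s)}$ has at least $m$ elements.
  \item For distinct $\lambda,\lambda'\in\Lambda^{(s)}$, the difference has the form $(\xi,0)$ with $\xi\ne0$, and it lies in $\mathcal Z_{\mathcal A}$. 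Hence $\wh{\one_{\mathcal A}}(\xi,0)=0$.
  \item By \eqref{eq:layer-zero-implication}, this gives $\wh{\one_{B_j}}(\xi)=0$ for every $j$.
  \item The projection of $\Lambda^{(s)}$ to $H$ is injective, since distinct elements have nonzero first-coordinate difference. Choose $m$ of the projected points and call this set $S\subseteq H$. Then $\card S=m=\card{B_j}$ and $(S-S)\setminus\{0\}\subseteq\mathcal Z_{B_j}$ for every $j$.
  \item By the difference criterion for spectra recorded in \Cref{sec-pre}, $S$ is a spectrum of every $B_j$. This contradicts the assumption that the $B_j$ have no common $m$-point spectrum, so $\mathcal A$ is not spectral.
\end{enumerate}

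I do not expect a genuine obstacle here. The only point requiring care is that the pigeonhole count $qm$ versus $q$ classes forces a class of size at least $m$, and not merely a smaller one. This holds because $\card\Lambda=\card{\mathcal A}$ and each layer of $\mathcal A$ has exactly $m$ points. The other point to check is that differences within one class lie in the slice $\Z_q$-coordinate $0$, so that hypothesis \eqref{eq:layer-zero-implication} applies to them.
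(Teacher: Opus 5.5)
Your proof is correct and follows the paper's argument essentially verbatim. The tiling is checked layer by layer. Non-spectrality comes from pigeonholing a putative spectrum onto one $\wh{\Z_q}$-fiber with at least $m$ points, projecting to $\wh H$, and using \eqref{eq:layer-zero-implication} to obtain a common $m$-point spectrum of the $B_j$.
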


\begin{proof}
The tiling assertion follows layer by layer from the direct factorizations $E+B_j=H$. Suppose that $\Lambda\subseteq\wh H\times\wh{\Z_q}$ were a spectrum of $\mathcal A$. Since $\card{\mathcal A}=qm$, one fiber of $\Lambda$ over $\wh{\Z_q}$ contains at least $m$ points. Select $m$ of them. Their projections to $\wh H$ are distinct, and the difference of any two selected frequencies has last coordinate zero. Spectral orthogonality and \eqref{eq:layer-zero-implication} therefore place every nonzero projected difference in the zero set of every $B_j$. The $m$ projections form a common spectrum of the $B_j$, a contradiction.
\end{proof}

\begin{lemma}\label{lem:layer-common-complement}
Let $V_0,\ldots,V_{r-1}\subseteq H$ be $m$-point sets, let $\jmath:\Z_q\to\{0,\ldots,r-1\}$ be surjective, and define
\[
  \mathcal A=\bigcup_{t\in\Z_q}(V_{\jmath(t)}\times\{t\}).
\]
If an $m$-point set $E\subseteq\wh H$ is a spectrum of every $V_j$, then $E\times\wh{\Z_q}$
is a spectrum of $\mathcal A$. If, moreover,
\begin{equation}\label{eq:layer-full-slab}
  H\times(\Z_q\setminus\{0\})\subseteq\mathcal A-\mathcal A
\end{equation}
and the sets $V_0,\ldots,V_{r-1}$ have no common tiling complement in $H$, then $\mathcal A$ does not tile $H\times\Z_q$.
\end{lemma}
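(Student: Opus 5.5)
The plan has two parts, mirroring \Cref{lem:layer-common-spectrum}.

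\emph{Spectral part.} Let $\Lambda=E\times\wh{\Z_q}$. It has $qm=\card{\mathcal A}$ elements, so only the difference condition needs checking. Take a nonzero difference $(\eta,s)$ with $\eta\in E-E$ and $s\in\Z_q$. The Fourier transform splits by layers:
\[
  \wh{\one_{\mathcal A}}(\eta,s)=\sum_{t\in\Z_q}\wh{\one_{V_{\jmath(t)}}}(\eta)\,\ee^{2\pi i st/q}.
\]
If $\eta\ne0$, every summand vanishes, because $E$ is a spectrum of each $V_j$ and hence $\eta\in\mathcal Z_{V_j}$. If $\eta=0$, then $s\ne0$ and the sum equals $m\sum_t\ee^{2\pi i st/q}=0$. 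So $\Lambda$ is a spectrum of $\mathcal A$.

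\emph{Non-tiling part.} Suppose $\mathcal A+\mathcal B=H\times\Z_q$. Counting gives $\card{\mathcal B}=\card H/m$. By \eqref{eq:tile-difference}, $(\mathcal B-\mathcal B)\cap(\mathcal A-\mathcal A)=\{0\}$. Hypothesis \eqref{eq:layer-full-slab} then forces every nonzero difference of $\mathcal B$ to lie in $H\times\{0\}$. So $\mathcal B$ lies in a single layer, say $\mathcal B=C\times\{c\}$ with $\card C=\card H/m$.

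Next, look at layer $t+c$ of the tiling. Its points are exactly $V_{\jmath(t)}+C$, and they must cover that layer of $H$ uniquely. Hence $V_{\jmath(t)}+C=H$ is a direct factorization for every $t$. Since $\jmath$ is surjective, $C$ is a common tiling complement of all the $V_j$, which contradicts the hypothesis.

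The only delicate step is the reduction of $\mathcal B$ to a single layer, and \eqref{eq:layer-full-slab} handles it directly. Everything else is a fiberwise computation.
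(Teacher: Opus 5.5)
Your proof is correct and follows the paper's argument essentially step for step. The spectral part splits into the case $e\ne e'$, where every layer sum vanishes, and the case $e=e'$, $\rho\ne\rho'$, where a complete nontrivial character sum appears; the non-tiling part combines \eqref{eq:tile-difference} with \eqref{eq:layer-full-slab} to confine the complement to a single layer and then reads off $V_{\jmath(t)}+C=H$ layer by layer. The only cosmetic difference is that the paper translates the complement into layer $0$, whereas you keep it in layer $c$.
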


\begin{proof}
The two sets $\mathcal A$ and $E\times\wh{\Z_q}$ both have $qm$ elements. Consider two distinct frequencies $(e,\rho)$ and $(e',\rho')$. If $e\ne e'$, then the Fourier sum over each layer $V_{\jmath(t)}$ vanishes because $E$ is a spectrum of that layer. If $e=e'$ and $\rho\ne\rho'$, each layer sum equals $m$, and summation over $t\in\Z_q$ is a complete nontrivial character sum. Thus all distinct frequencies are orthogonal, proving spectrality.

Suppose that $\mathcal A+T=H\times\Z_q$. By \eqref{eq:tile-difference} and \eqref{eq:layer-full-slab}, no two elements of $T$ can have different last coordinates. After translating $T$, write $T=C\times\{0\}$. Comparing the $q$ layers of the tiling gives
\[
  V_{\jmath(t)}+C=H
  \qquad(t\in\Z_q).
\]
Since $\jmath$ is surjective, $C$ is a common tiling complement of all the $V_j$, a contradiction.
\end{proof}

\section{A tile that is not spectral}\label{sec:tile-not-spectral}

Both finite counterexamples will live in
\[
  \mathcal G=\Z_{60}\times\Z_{12},
  \qquad \card{\mathcal G}=720.
\]
We first construct the tiling example.

\begin{theorem}\label{thm:main-tile}
There exist explicit sets $A_-,T_-\subseteq\mathcal G$ with
\[
  \card{A_-}=60,
  \qquad
  \card{T_-}=12,
  \qquad
  A_-+T_-=\mathcal G,
\]
such that $A_-$ is not spectral.
\end{theorem}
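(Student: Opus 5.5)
The natural route is \Cref{lem:layer-common-spectrum} with $H=\Z_{12}^2$, $q=5$, $r=3$, $m=12$, followed by the identification $\Z_{12}^2\times\Z_5\cong\Z_{60}\times\Z_{12}$ given by the Chinese remainder theorem. The first step is to exhibit an explicit $12$-point set $E\subseteq\Z_{12}^2$, the graph of a function $\Z_{12}\to\Z_{12}$. Alongside it we need three $12$-point sets $B_0,B_1,B_2\subseteq\Z_{12}^2$ with $E+B_j=\Z_{12}^2$ for each $j$. Each factorization is checked by the criterion \eqref{eq:tile-difference}: $\card{E}\card{B_j}=144$ and $(E-E)\cap(B_j-B_j)=\{0\}$. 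This is a finite difference-set computation.

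Next we fix a surjection $\jmath:\Z_5\to\{0,1,2\}$, for instance $\jmath=(0,0,1,1,2)$. Set $\mathcal A=\bigcup_t B_{\jmath(t)}\times\{t\}$ and $T=E\times\{0\}$. The lemma gives $T+\mathcal A=\Z_{12}^2\times\Z_5$. Transporting through the CRT isomorphism yields $A_-$ and $T_-$ of sizes $60$ and $12$ with $A_-+T_-=\mathcal G$. Spectrality is invariant under group isomorphism, since characters correspond.

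Two hypotheses of the lemma remain, and the first is \eqref{eq:layer-zero-implication}. We have
\[
\wh{\one_{\mathcal A}}(\xi,0)=\sum_t\wh{\one_{B_{\jmath(t)}}}(\xi)=2\wh{\one_{B_0}}(\xi)+2\wh{\one_{B_1}}(\xi)+\wh{\one_{B_2}}(\xi).
\]
The multiplicities $(2,2,1)$ should be chosen so that this vanishing forces each term to vanish. One option is to use the structure of $\mathcal Z_{B_j}$: for tiles of a $\Z_{12}^2$ graph complement, the nonzero values of $\wh{\one_{B_j}}(\xi)$ are cyclotomic integers in $\mathbb Q(\zeta_{12})$ of a constrained form. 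A more robust alternative is simply to tabulate all $144$ values of $\xi$ and verify directly that no cancellation occurs. If cancellation does occur, one adjusts $q$ or the multiplicities, or picks $\jmath$ with weights such as $(1,1,3)$. Since $q=5$ is forced by the group, only the weight pattern can change.

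The main obstacle is the second hypothesis: $B_0,B_1,B_2$ have no common $12$-point spectrum. A common spectrum $\Lambda$ must satisfy $(\Lambda-\Lambda)\setminus\{0\}\subseteq Z:=\mathcal Z_{B_0}\cap\mathcal Z_{B_1}\cap\mathcal Z_{B_2}$. The task is therefore to show that the Cayley graph on $\Z_{12}^2$ with connection set $Z$ has clique number less than $12$. I would first try a structural argument. Compute $Z$ explicitly and find a subgroup or coset partition of $\Z_{12}^2$ into fewer than $12$ classes, each of which contains at most one point of any clique. For example, if $Z$ avoids some subgroup $K$ of index less than $12$, then distinct clique points lie in distinct cosets of $K$, so a clique has fewer than $12$ points. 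Failing that, I would split by the residues of $\Lambda$ modulo $2$ and $3$ and bound the clique in each case. As a last resort, a finite clique search on $144$ vertices, after normalizing $0\in\Lambda$, settles the claim rigorously. With both hypotheses verified, \Cref{lem:layer-common-spectrum} shows that $\mathcal A$, and hence $A_-$, is not spectral.
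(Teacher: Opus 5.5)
The gap is that you never produce the seed configuration, and that is where the whole theorem lives. Your reduction is exactly the paper's scaffolding: \Cref{lem:layer-common-spectrum} with $H=\Z_{12}^2$, $q=5$, $m=12$, followed by the Chinese remainder isomorphism $\Z_{12}^2\times\Z_5\cong\Z_{60}\times\Z_{12}$. But that part is routine and already proved in the preliminaries. What you leave as an assumption is the real content. You need a $12$-point tile $E\subseteq\Z_{12}^2$ with three tiling complements $B_0,B_1,B_2$ that have no common $12$-point spectrum and satisfy \eqref{eq:layer-zero-implication}. Nothing in your proposal shows that such sets exist; your hedge ``if cancellation does occur, one adjusts\dots'' reflects this. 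So neither hypothesis of the lemma is verified.

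The paper supplies the sets explicitly:
\begin{itemize}
\item $E=\{(f(y),y)\}$ with $f=(0,5,0,7,9,2,10,4,7,8,10,10)$;
\item $B_0=\Z_{12}\times\{0\}$, $B_1=\{(x,-x)\}$, and $B_2=\{(3c+4a,\,4a+6b)\}$.
\end{itemize}
The three tilings follow because $E$ is a graph, because $y\mapsto f(y)+y$ is a permutation, and from a quotient factorization through $(x,y)\mapsto(x-y\bmod2,\,x+2y\bmod12)$. For these sets, wherever the $B_0$- or $B_1$-transform is nonzero, the $B_2$-transform equals $0$ or $12$. Hence any positive weights give non-cancellation, including your $(2,2,1)$.

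Your primary plan for the no-common-spectrum step would also fail on a configuration that actually works. For the paper's sets, the common zero set $Z$ contains $10$-point cliques, for example $\{(0,9),(6,9),(2,2),(4,0),(8,0),(10,4),(3,5),(5,3),(1,8),(11,10)\}$. Any subgroup of index less than $12$ has index dividing $144$, hence at most $9$. So no subgroup $K$ of index less than $12$ has $K\cap Z=\varnothing$, and no coset pigeonhole can reach the bound.

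The paper's argument is finer. Orthogonality for $B_0$ and $B_1$ forces $u$ and $u-v$ to run through all of $\Z_{12}$. Orthogonality for $B_2$ allows at most two points in each of the six classes $(u+v\bmod3,\ v\bmod2)$, with $u$-differences $\equiv2\pmod4$ inside a class. A $12$-point $\Lambda$ must therefore fill every class with a same-parity pair. Counting odd values of $u$ and of $u-v$ then gives both $a+b=3$ and $a=b$, a contradiction.

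Your fallback of splitting by residues mod $2$ and $3$ points in this direction, but the class bound alone only yields $\card{\Lambda}\le12$. The parity count is the missing idea that excludes exactly $12$.
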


\subsection{A seed tile with incompatible complements}

Let
\[
  H=\Z_{12}\times\Z_{12}.
\]
Define $f:\Z_{12}\to\Z_{12}$ by
\[
  \bigl(f(0),f(1),\ldots,f(11)\bigr)
  =(0,5,0,7,9,2,10,4,7,8,10,10),
\]
and put
\[
  E=\{(f(y),y):y\in\Z_{12}\}.
\]
Consider the following three $12$-point sets:
\begin{align}
  B_0&=\{(x,0):x\in\Z_{12}\},\notag\\
  B_1&=\{(x,-x):x\in\Z_{12}\},\notag\\
  B_2&=\{(3c+4a,\,4a+6b):a\in\Z_3,\ b,c\in\Z_2\}.\label{eq:B2}
\end{align}

\begin{lemma}\label{lem:three-tilings}
For $j=0,1,2$, one has $E+B_j=H$.
\end{lemma}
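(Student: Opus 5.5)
We verify each factorization $E+B_j=H$ with the counting criterion \eqref{eq:tile-difference}. Since $\card{E}\,\card{B_j}=144=\card{H}$ in every case, it suffices to show that every element of $H$ is covered. Equivalently, we exhibit a function $\phi_j$ on $H$ that is constant on the relevant translates and injective on the sets being placed. All three checks reduce to a finite computation with the table of $f$.

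For $j=0$ the claim is immediate. $E$ has exactly one point $(f(y),y)$ in each row $\Z_{12}\times\{y\}$. Adding the horizontal line $B_0$ sweeps out that entire row, so each row is covered exactly once.

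For $j=1$ we use the homomorphism $\phi(u,v)=u+v$ from $H$ to $\Z_{12}$, whose kernel is exactly $B_1$. Hence $E+B_1=H$ holds if and only if $\phi$ is injective on $E$. This means the twelve values $f(y)+y \pmod{12}$ must be distinct. From the table they are
\[
0,6,2,10,1,7,4,11,3,5,8,9,
\]
a permutation of $\Z_{12}$, which settles this case.

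For $j=2$, $B_2$ is not a subgroup, but it decomposes as $B_2=K+\{(0,0),(3,0)\}$. Here
\[
K=\{(4a,4a+6b): a\in\Z_3,\ b\in\Z_2\}
\]
is a subgroup of order $6$. Then $E+B_2=H$ is equivalent to the $24$-point set $E\cup(E+(3,0))$ mapping bijectively onto $H/K$. We realize the quotient by the homomorphism
\[
\psi(u,v)=\bigl(u \bmod 4,\ (v-u)\bmod 6\bigr)\in\Z_4\times\Z_6 .
\]
Its kernel is $K$: the condition $u\equiv0 \pmod 4$ forces $u=4a$, and $v\equiv u \pmod 6$ then forces $v=4a+6b$. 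Since $\card{H/K}=24$, the map $\psi$ is onto. The claim therefore reduces to checking that the $24$ pairs
\[
\bigl(f(y)\bmod 4,\ (y-f(y))\bmod 6\bigr),
\qquad
\bigl(f(y)+3\bmod 4,\ (y-f(y)-3)\bmod 6\bigr),
\qquad y\in\Z_{12},
\]
are pairwise distinct. This is a direct tabulation from the listed values of $f$.

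The only step requiring care is this last case: choosing the coset decomposition of $B_2$ and a correct coordinate system for $H/K$, and then carrying out the $24$-entry check. The checks for $B_0$ and $B_1$ are one-line.
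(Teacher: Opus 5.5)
Your proof is correct and follows the paper's argument: rows for $B_0$, the permutation $y\mapsto f(y)+y$ for $B_1$, and for $B_2$ the same splitting $B_2=K+\{(0,0),(3,0)\}$ (your $K$ is the paper's $P$), followed by a quotient map with kernel $K$ onto a group of order $24$. Your coordinates $\psi$ into $\Z_4\times\Z_6$, in place of the paper's $(x-y\bmod 2,\ x+2y\bmod 12)$ into $\Z_2\times\Z_{12}$, are only a cosmetic difference. The tabulation you deferred does succeed, and it should be displayed rather than asserted: $\psi(E)$ is the set of pairs $(r,s)$ with $s\in\{0,1,2,3\}$ for even $r$ and $s\in\{1,2\}$ for odd $r$, and its translate by $\psi(3,0)=(3,3)$ is exactly the complementary twelve pairs.
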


\begin{proof}
The assertion for $B_0$ follows immediately because $E$ is a graph over its second coordinate.

For $B_1$, observe that
\begin{equation}\label{eq:f-plus-y}
  \bigl(f(y)+y\bigr)_{y=0}^{11}
  =(0,6,2,10,1,7,4,11,3,5,8,9),
\end{equation}
which is a permutation of $\Z_{12}$. Given $(u,v)\in H$, the identity
\[
  (u,v)=(f(y),y)+(x,-x)
\]
forces $u+v=f(y)+y$, which determines $y$ uniquely; then $x=u-f(y)$ is also uniquely determined.

For $B_2$, let
\[
  P=\{(4a,4a+6b):a\in\Z_3,\ b\in\Z_2\},
\]
so that $B_2=P+\{(0,0),(3,0)\}$. Consider the homomorphism
\[
  \phi:H\longrightarrow\Z_2\times\Z_{12},
  \qquad
  \phi(x,y)=(x-y\bmod2,\ x+2y\bmod12).
\]
A direct computation gives $\ker \phi=P$ and 
\begin{align*}
  \phi(E)=\{&(0,0),(0,1),(0,4),(0,6),(0,7),(0,10),\\
         &(1,0),(1,2),(1,5),(1,6),(1,8),(1,11)\}.
\end{align*}
Since $\phi(3,0)=(1,3)$, the two sets $\phi(E)$ and $\phi(E)+(1,3)$ are disjoint and together exhaust $\Z_2\times\Z_{12}$. Lifting this direct factorization through $\phi$ gives $E+B_2=H$.
\end{proof}

Let $\zeta=\ee^{2\pi i/12}$. For $(u,v)\in H$, the Fourier transforms of the three complements are
\begin{align}
  \wh{\one_{B_0}}(u,v)&=12\,\one_{\{u=0\}},\label{eq:B0hat}\\
  \wh{\one_{B_1}}(u,v)&=12\,\one_{\{u=v\}},\label{eq:B1hat}\\
  \wh{\one_{B_2}}(u,v)&=(1+\zeta^{3u})(1+\zeta^{6v})
  (1+\zeta^{4(u+v)}+\zeta^{8(u+v)}).\label{eq:B2hat}
\end{align}
The last identity follows by separating the sums over $c$, $b$, and $a$ in \eqref{eq:B2}.

\begin{proposition}\label{prop:no-common-spectrum}
There is no $12$-point set $\Lambda\subseteq\wh{H}$ that is simultaneously a spectrum of $B_0$, $B_1$, and $B_2$.
\end{proposition}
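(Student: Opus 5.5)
The plan is to translate the three spectrum conditions into combinatorial constraints on $\Lambda$ using the Fourier formulas \eqref{eq:B0hat}--\eqref{eq:B2hat}, and then show that these constraints are incompatible. Recall that $\Lambda$ is a spectrum of $B_j$ exactly when $\card{\Lambda}=12$ and every nonzero difference of $\Lambda$ lies in $\mathcal Z_{B_j}$. The zero sets can be read off directly:
\[
  \mathcal Z_{B_0}=\{u\neq0\},\qquad \mathcal Z_{B_1}=\{u\neq v\},
\]
\[
  \mathcal Z_{B_2}=\{u\equiv2\ (\mathrm{mod}\ 4)\}\cup\{v\ \text{odd}\}\cup\{u+v\not\equiv0\ (\mathrm{mod}\ 3)\}.
\]
Here $1+\zeta^{3u}=0$ iff $3u\equiv6\pmod{12}$, and the cubic factor vanishes iff $3\nmid u+v$.

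\textbf{Step 1 ($B_0$ and $B_1$).} The $B_0$ condition says the first coordinates of $\Lambda$ are distinct, so $\Lambda=\{(u,g(u)):u\in\Z_{12}\}$ for some $g$. The $B_1$ condition then says that $u\mapsto u-g(u)$ is a permutation of $\Z_{12}$.

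\textbf{Step 2 ($B_2$ via a quotient).} The condition for $B_2$ depends only on the residues $(u\bmod 4,\ v\bmod 2,\ u+v\bmod 3)$, so I would push $\Lambda$ forward under
\[
  \psi:H\to\Z_4\times\Z_2\times\Z_3 .
\]
Two points of $\Lambda$ with the same image under $\psi$ would have a forbidden difference. Within a fixed class of $(v\bmod 2,\,u+v\bmod 3)$, any two points must have $u$-residues mod $4$ differing by exactly $2$, so each of the $6$ classes contains at most $2$ points. Since $\card{\Lambda}=12$, equality holds throughout. Hence every class $(v\bmod2,\,u+v\bmod3)$ contains exactly two points of $\Lambda$, with $u\bmod4$ forming $\{0,2\}$ or $\{1,3\}$. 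This rigid structure is the key intermediate statement.

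\textbf{Step 3 (combining the constraints).} I would combine Step 2 with the graph structure from Step 1. The variable $u$ runs over all of $\Z_{12}$, so each residue mod $4$ occurs three times; this forces exactly three classes of type $\{0,2\}$ and three of type $\{1,3\}$. In addition, $u$ is equidistributed mod $3$, and $u-v$ is equidistributed mod $3$ and mod $4$ by Step 1. I would then aim for a contradiction by double counting, for instance by tracking the parity of $v$ and the residue $u+v\bmod3$ against the equidistribution of $u-v$ mod $12$. A concrete attempt is to compute $\sum_{\lambda\in\Lambda}(u-v)$, or the distribution of $(u-v)\bmod 4$ together with $(u+v)\bmod3$, in two ways. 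This would give congruences that the class structure cannot satisfy, since $u-v\equiv u+v-2v$ links the $\Z_3$-label with $v$, and the $\Z_4$-structure links $u$ with $v$ parity.

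The main obstacle is Step 3. It is not clear in advance that a clean counting identity yields the contradiction; the mod-$2$ and mod-$3$ bookkeeping may turn out to be consistent at the level of totals. In that case I would use the rigid structure from Step 2 to reduce to a small finite case analysis. Choose which three classes have $u\bmod4\in\{0,2\}$, then assign the actual values of $u\bmod 12$ and $v$ compatible with the class labels. Each such partial assignment is checked against the permutation condition on $u-g(u)$. The search space after Step 2 is small enough to enumerate explicitly, or to verify mechanically, and the proposition asserts that no branch survives.
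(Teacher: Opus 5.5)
Your Steps 1 and 2 are correct and match the paper's argument. The $B_0$ and $B_1$ conditions make $u$ and $u-v$ run through $\Z_{12}$. The $B_2$ condition forces each of the six classes indexed by $(v\bmod 2,\ u+v\bmod 3)$ to contain exactly two points whose $u$-coordinates differ by $2$ modulo $4$. The gap is Step 3: no contradiction is ever derived. You list candidate invariants (the sum of $u-v$, and the joint distribution of $(u-v)\bmod 4$ with $(u+v)\bmod 3$) and say you doubt they work. You then fall back on an enumeration that you do not carry out, and you justify its outcome by saying ``the proposition asserts that no branch survives''. That assumes the conclusion. An exhaustive search would count as a proof only if it were actually performed and reported, so the proposal as written does not prove the statement.

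The missing step is a short parity count, which is exactly the mod-$2$ bookkeeping you feared would be consistent. Split the six classes by $\gamma=v\bmod 2$, giving three classes with $v$ even and three with $v$ odd. Let $a$ be the number of classes with $v$ even whose two $u$-values are odd, and $b$ the same number for $v$ odd. Exactly six $u$'s are odd, so $2a+2b=6$, i.e.\ $a+b=3$; this is your observation that three classes have type $\{1,3\}$. Since $u\mapsto u-g(u)$ is a permutation of $\Z_{12}$, exactly six values of $u-v$ are odd as well. But $u-v$ has the parity of $u$ when $v$ is even and the opposite parity when $v$ is odd. Hence $2a+2(3-b)=6$, i.e.\ $a=b$. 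Together with $a+b=3$ this gives $2a=3$, which is impossible. Only the parity of $u-v$ is needed from the $B_1$ condition; the residues mod $3$ and mod $4$ you proposed to track are not needed.
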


\begin{proof}
Assume that such a set $\Lambda$ exists, and write its elements as $(u,v)$. By \eqref{eq:B0hat}, two distinct elements of $\Lambda$ cannot have the same first coordinate. Hence the twelve $u$-coordinates are exactly the elements of $\Z_{12}$. Similarly, \eqref{eq:B1hat} implies that the twelve values $u-v$ also run through all of $\Z_{12}$.

Partition $\Lambda$ into six classes according to
\[
  \beta=u+v\pmod3,
  \qquad
  \gamma=v\pmod2.
\]
If $(u,v)$ and $(u',v')$ lie in the same class, then
\[
  (u-u')+(v-v')\equiv0\pmod3,
  \qquad
  v-v'\equiv0\pmod2.
\]
Thus the last two factors in \eqref{eq:B2hat}, evaluated at their difference, are equal to $2$ and $3$, respectively, and are therefore nonzero. Orthogonality on $B_2$ forces
\[
  1+\zeta^{3(u-u')}=0,
\]
or equivalently
\[
  u-u'\equiv2\pmod4.
\]
No set of three residues modulo $4$ has all pairwise differences congruent to $2$ modulo $4$. Therefore, each of the six classes has at most two elements. Since $\card{\Lambda}=12$, every class has exactly two elements, and the two $u$-coordinates in each class have the same parity.

Let $a$ be the number of classes with $\gamma=0$ in which the common parity of $u$ is odd, and define $b$ analogously for $\gamma=1$. Since the $u$-coordinates run through $\Z_{12}$, exactly six of them are odd. Therefore
\[
  2(a+b)=6,
  \qquad a+b=3.
\]
The values $u-v$ also run through $\Z_{12}$, so exactly six of them are odd. In a class with $\gamma=0$, the parity of $u-v$ is the parity of $u$; in a class with $\gamma=1$, it is the opposite parity. Hence
\[
  2\bigl(a+(3-b)\bigr)=6,
  \qquad a=b.
\]
The relations $a+b=3$ and $a=b$ are incompatible for integers $a,b$. This contradiction proves the proposition.
\end{proof}

Thus the tile $E$ has three tiling complements with no common spectrum. We now amplify this incompatibility into a tile that is itself non-spectral.

\subsection{Five-layer amplification and passage to the plane}

Let
\[
  K=H\times\Z_5
\]
and define
\[
  \widetilde A_-
  =(B_0\times\{0\})
  \cup(B_1\times\{3,4\})
  \cup(B_2\times\{1,2\}).
\]

\begin{proposition}\label{prop:Atilde-minus}
The set $\widetilde A_-$ tiles $K$ but is not spectral.
\end{proposition}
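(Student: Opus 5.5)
We apply \Cref{lem:layer-common-spectrum} with $H=\Z_{12}\times\Z_{12}$, $q=5$, $r=3$, $m=12$, and the surjection $\jmath:\Z_5\to\{0,1,2\}$ given by $\jmath(0)=0$, $\jmath(1)=\jmath(2)=2$, $\jmath(3)=\jmath(4)=1$. With these choices, $\widetilde A_-$ is exactly the set $\mathcal A$ of that lemma.

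By \Cref{lem:three-tilings}, $E+B_j=H$ for $j=0,1,2$. The first part of \Cref{lem:layer-common-spectrum} then gives the tiling $(E\times\{0\})+\widetilde A_-=K$.

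For non-spectrality we need two inputs. The first is that $B_0,B_1,B_2$ have no common $12$-point spectrum, which is \Cref{prop:no-common-spectrum}. The second is the zero-set implication \eqref{eq:layer-zero-implication}. To check it, write $F_j=\wh{\one_{B_j}}(\xi)$ for $\xi=(u,v)\in H$. Summing over layers, the character on the last coordinate is trivial at frequency $0$, so
\[
  \wh{\one_{\widetilde A_-}}(\xi,0)=F_0+2F_1+2F_2 .
\]
The key step is to show that this vanishes only when all three $F_j$ vanish. We do this by recording the possible values of each $F_j$.

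By \eqref{eq:B0hat} and \eqref{eq:B1hat}, $F_0,F_1\in\{0,12\}$, so both are nonnegative reals.

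For $F_2$ we use the factored form \eqref{eq:B2hat}. The factor $1+\zeta^{6v}$ lies in $\{0,2\}$. The factor $1+\zeta^{4(u+v)}+\zeta^{8(u+v)}$ lies in $\{0,3\}$. The factor $1+\zeta^{3u}=1+i^u$ lies in $\{2,\,1+i,\,0,\,1-i\}$. Hence
\[
  F_2\in\{0,\;12,\;6+6i,\;6-6i\},
\]
and in particular $\operatorname{Re}F_2\ge0$, with equality only when $F_2=0$.

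Now take real parts in $F_0+2F_1+2F_2=0$. This gives a sum of three nonnegative numbers equal to zero, so each is zero. Therefore $F_0=F_1=0$ and $\operatorname{Re}F_2=0$, and the latter forces $F_2=0$. This establishes \eqref{eq:layer-zero-implication}, and \Cref{lem:layer-common-spectrum} then shows that $\widetilde A_-$ is not spectral.

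The main obstacle is this zero-set implication: a sum of nonzero Fourier values could in principle cancel. The explicit value lists above, and specifically the fact that every $F_j$ has nonnegative real part, rule this out. Everything else is a direct citation of earlier results.
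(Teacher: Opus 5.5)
Your proof is correct and follows the paper's structure: both reduce to \Cref{lem:layer-common-spectrum}, cite \Cref{lem:three-tilings} and \Cref{prop:no-common-spectrum}, and put the real work into the no-cancellation condition \eqref{eq:layer-zero-implication}. The only difference is in that step. The paper splits into three cases: $u\ne0$ with $u\ne v$, then $u=0$, then $u=v\ne0$, using $\wh{\one_{B_2}}(u,u)=0$ in the last case. Your observation that each of $F_0,\,2F_1,\,2F_2$ has nonnegative real part, vanishing only when the term itself is zero, handles all frequencies at once and is slightly cleaner.
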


\begin{proof}
By \Cref{lem:three-tilings}, every $H$-layer is tiled by $E$. Thus the tiling assertion is also the first conclusion of \Cref{lem:layer-common-spectrum}, with the layer assignment
\[
  \jmath(0)=0,
  \qquad
  \jmath(1)=\jmath(2)=2,
  \qquad
  \jmath(3)=\jmath(4)=1,
\]
and in particular
\begin{equation}\label{eq:Atilde-minus-tiling}
  (E\times\{0\})+\widetilde A_-=K.
\end{equation}

At a frequency with zero $\Z_5$-coordinate,
\[
  \wh{\one_{\widetilde A_-}}(u,v,0)
  =\wh{\one_{B_0}}(u,v)
  +2\wh{\one_{B_1}}(u,v)
  +2\wh{\one_{B_2}}(u,v).
\]
We claim that
\begin{equation}\label{eq:weighted-zero-equivalence}
  \wh{\one_{\widetilde A_-}}(u,v,0)=0
  \quad\Longleftrightarrow\quad
  \wh{\one_{B_0}}(u,v)
  =\wh{\one_{B_1}}(u,v)
  =\wh{\one_{B_2}}(u,v)=0.
\end{equation}
If $u\ne0$ and $u\ne v$, the $B_0$- and $B_1$-terms vanish, so the equivalence is immediate. If $u=0$, the $B_0$-term is $12$. From \eqref{eq:B2hat}, the $B_2$-term is nonzero only for $v=0$ or $v=6$, and in both cases it is the positive real number $12$; the $B_1$-term is nonzero only for $v=0$. Hence cancellation is impossible.

It remains to treat $u=v\ne0$. In this case $\wh{\one_{B_2}}(u,u)=0$, as follows directly from the three factors in \eqref{eq:B2hat}:
\begin{itemize}[leftmargin=2em]
  \item if $u$ is odd, then $1+\zeta^{6u}=0$;
  \item if $u\equiv2\pmod4$, then $1+\zeta^{3u}=0$;
  \item the remaining nonzero residues are $u=4,8$, and then
  $1+\zeta^{4u}+\zeta^{8u}=1+\zeta^4+\zeta^8=0$.
\end{itemize}
The weighted $B_1$-term is therefore $24$, so the left-hand side is nonzero, exactly as required. This proves \eqref{eq:weighted-zero-equivalence}.

Now \eqref{eq:weighted-zero-equivalence} gives the implication \eqref{eq:layer-zero-implication}, while \Cref{prop:no-common-spectrum} rules out a common $12$-point spectrum. The second conclusion of \Cref{lem:layer-common-spectrum} shows that $\widetilde A_-$ is not spectral.
\end{proof}

Consider the Chinese remainder isomorphism
\begin{equation}\label{eq:crt-physical}
  \Phi:H\times\Z_5\longrightarrow\mathcal G,
  \qquad
  \Phi(x,y,t)=(25x+36t\bmod60,\ y\bmod12).
\end{equation}
Indeed, $25x+36t$ is congruent to $x$ modulo $12$ and to $t$ modulo $5$. Define
\[
  A_-=\Phi(\widetilde A_-),
  \qquad
  T_-=\Phi(E\times\{0\}).
\]
More explicitly,
\begin{align}
  A_-={}&\{(25x,0):x\in\Z_{12}\}\notag\\
  &\cup\{(25x+36t,-x):t\in\{3,4\},\ x\in\Z_{12}\}\notag\\
  &\cup\{(25(3c+4a)+36t,\,4a+6b):
  t\in\{1,2\},\ a\in\Z_3,\ b,c\in\Z_2\},\label{eq:A-minus-explicit}
\end{align}
and
\begin{equation}\label{eq:T-minus-explicit}
  T_-=\{(25f(y),y):y\in\Z_{12}\}.
\end{equation}
All first coordinates in \eqref{eq:A-minus-explicit}--\eqref{eq:T-minus-explicit} are understood modulo $60$, and all second coordinates are understood modulo $12$.

\begin{proof}[Proof of \Cref{thm:main-tile}]
The map $\Phi$ is a group isomorphism. Therefore \eqref{eq:Atilde-minus-tiling} gives
\[
  A_-+T_-=\mathcal G,
\]
and \Cref{prop:Atilde-minus} shows that $A_-$ is not spectral.
\end{proof}

\begin{corollary}\label{cor:plane-tile}
There exists a bounded translational tile in $\R^2$ that is not spectral. Moreover, it may be chosen as a finite union of unit squares.
\end{corollary}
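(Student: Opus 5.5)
The plan is to push the finite example of \Cref{thm:main-tile} through \Cref{prop:transference}. We take $A=A_-\subseteq\mathcal G=\Z_{60}\times\Z_{12}$, so $n_1=60$, $n_2=12$ and $\card{A}=60$. By \Cref{thm:main-tile}, $A_-$ tiles $\mathcal G$ (with complement $T_-$) but is not spectral.

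First we fix $k$. By part~(ii) of \Cref{prop:transference}, $A_-[k]$ tiles $\Z^2$ for every $k\ge1$; explicitly,
\[
  A_-[k]+\bigl(T_-+(60k\Z)\times(12k\Z)\bigr)=\Z^2 .
\]
By part~(i) there is a threshold $k_{\mathrm{sp}}$ such that, for every $k\ge k_{\mathrm{sp}}$, the set $A_-[k]$ is spectral in $\Z^2$ if and only if $A_-$ is spectral in $\mathcal G$. Since $A_-$ is not spectral, we choose any $k\ge k_{\mathrm{sp}}$. Then $A_-[k]$ is a finite subset of $\Z^2$ that tiles $\Z^2$ and is not spectral in $\Z^2$.

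Next we pass to the plane. Set $\Omega=A_-[k]+[0,1)^2$. This is a finite union of $60k^2$ unit squares, and it is bounded and of positive measure. By part~(iii) of \Cref{prop:transference}, applied with $F=A_-[k]$:
\begin{itemize}[leftmargin=2em]
  \item the tiling equivalence shows that $\Omega$ tiles $\R^2$;
  \item the spectral equivalence shows that $\Omega$ is not spectral in $\R^2$.
\end{itemize}
This proves the corollary.

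The only delicate point is the non-spectral transfer in part~(i). It needs $k$ large and relies on the converse direction of \cite[Theorem~4.1]{KM06}, which the paper takes as given, so the corollary itself reduces to bookkeeping. The one thing to check is that the hypotheses match: $\mathcal G$ has rank two with the rectangular shape $(n_1,n_2)=(60,12)$, exactly the form assumed in \Cref{prop:transference}, so no further work is needed.
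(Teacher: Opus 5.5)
Your proposal is correct and follows the paper's proof. You lift $A_-$ to $A_-[k]$ for some $k\ge k_{\mathrm{sp}}$ using parts (i)–(ii) of \Cref{prop:transference}, then pass to $A_-[k]+[0,1)^2$ using part (iii); your explicit periodic complement and the count of $60k^2$ squares are only slightly more detailed bookkeeping than the paper records.
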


\begin{proof}
Apply \Cref{prop:transference} to $A_-\subset\Z_{60}\times\Z_{12}$. Since $A_-$ tiles the finite group, $A_-[k]$ tiles $\Z^2$ for every $k\ge1$. Since $A_-$ is not spectral, the converse spectral transfer gives a threshold $k_{\mathrm{sp}}$ such that, for every $k\ge k_{\mathrm{sp}}$,
\[
  A_-[k]
  =A_-+\{(60j,12\ell):0\le j,\ell<k\}
\]
is not spectral. Thus, for every such $k$,
\[
  \Omega_-^{(k)}=A_-[k]+[0,1)^2
\]
is a non-spectral translational tile of $\R^2$. 
\end{proof}

\section{A spectral set that does not tile}\label{sec:spectral-not-tile}

We retain the notation $H$, $E$, $K$, $\mathcal G$, and $\Phi$ from the preceding section.

\begin{theorem}\label{thm:main-spectral}
There exist explicit sets $A_+,\Lambda_+\subseteq\mathcal G$ with
\[
  \card{A_+}=\card{\Lambda_+}=60,
\]
such that $\Lambda_+$ is a spectrum of $A_+$, but $A_+$ does not tile $\mathcal G$.
\end{theorem}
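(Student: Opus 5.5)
The plan is to dualize the construction of \Cref{sec:tile-not-spectral} and apply \Cref{lem:layer-common-complement} with the roles of $H$ and $\wh H\cong H$ exchanged. By \eqref{eq:spectral-symmetry}, any spectrum $V\subseteq\wh H$ of $E$ has $E$ as a spectrum. So it suffices to find three $12$-point spectra $V_0,V_1,V_2\subseteq\wh H$ of $E$ with no common tiling complement in $\wh H$. We then stack them in five layers over $\Z_5$, apply \Cref{lem:layer-common-complement} in $\wh H\times\Z_5$, and transport the result to $\mathcal G$ by a Chinese remainder isomorphism of the same shape as $\Phi$. The spectrum $\Lambda_+$ will be the image of $E\times\wh{\Z_5}$.

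\textbf{The first two spectra.} I take $V_0=\{0\}\times\Z_{12}$ and $V_1=\{(u,u):u\in\Z_{12}\}$.
\begin{itemize}[leftmargin=2em]
  \item $V_0$ is a spectrum of $E$: for $v\ne v'$ the Fourier sum over $E$ at $(0,v-v')$ is $\sum_y\zeta^{(v-v')y}=0$, because $E$ is a graph over $y$.
  \item $V_1$ is a spectrum of $E$: the sum at $(d,d)$ is $\sum_y\zeta^{d(f(y)+y)}$, which vanishes for $d\ne0$ because \eqref{eq:f-plus-y} is a permutation.
\end{itemize}
Next I describe the common complements of these two. Any complement $C$ of $V_0$ meets each column exactly once, so $C=\{(u,g(u))\}$. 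Any complement of $V_1$ has distinct values of $u-v$, so $u\mapsto g(u)-u$ must also be a permutation. This is the exact analogue of the first two constraints in \Cref{prop:no-common-spectrum}.

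\textbf{The third spectrum.} I choose $V_2$ dual to $B_2$, namely a union of two cosets of a $6$-element subgroup chosen so that the zero set of $\one_E$ contains its differences. I would try a set of the form
\[
  \{(4a+6b,\ 3c+4a)\}
\]
or a suitable variant, and check $(V_2-V_2)\setminus\{0\}\subseteq\mathcal Z_E$ by the same homomorphism trick used for $B_2$ in \Cref{lem:three-tilings}. The key requirement is that the tiling condition $C+V_2=\wh H$, via \eqref{eq:tile-difference}, forces the following: each class of $C$ by a residue mod $3$ and a residue mod $2$ has at most two elements, and those two elements have coordinates differing by $2$ mod $4$. With this, a parity count identical to the $a+b=3$, $a=b$ argument of \Cref{prop:no-common-spectrum} rules out a common complement.

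\textbf{Layer assignment.} I would place $V_0$ at $t=0$, $V_1$ at $t\in\{1,2\}$ and $V_2$ at $t\in\{3,4\}$. Condition \eqref{eq:layer-full-slab} then needs $H\times\{d\}\subseteq\mathcal A-\mathcal A$ for each $d\ne0$.
\begin{itemize}[leftmargin=2em]
  \item The layer pairs $(1,0)$ and $(2,0)$ give $d=1,2$, and their negatives give $d=4,3$.
  \item So it suffices that $V_1-V_0=\{(u,u-v)\}=\wh H$, which is clear.
\end{itemize}
Hence \eqref{eq:layer-full-slab} holds, and \Cref{lem:layer-common-complement} gives both conclusions: $E\times\wh{\Z_5}$ is a spectrum of the layered set, and the layered set does not tile.

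\textbf{Main obstacle.} The hard step is choosing $V_2$ so that it is simultaneously a spectrum of the specific graph $E$ and incompatible, by the parity count, with every complement $\{(u,g(u))\}$ of $V_0$ and $V_1$. If my first candidate fails, I would search among unions of cosets of subgroups of order $6$ and $4$ annihilating suitable factors of $\wh{\one_E}$. The function $f$ was evidently tuned to make $E$ tile with $B_2$, so I expect a matching dual choice to exist.
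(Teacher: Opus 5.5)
Your framework matches the paper's: the same $V_0=\{0\}\times\Z_{12}$ and diagonal $V_1$, a five-layer stack handled by \Cref{lem:layer-common-complement}, and the Chinese remainder transport. Your layer assignment ($V_0$ at $t=0$, $V_1$ at $t\in\{1,2\}$, $V_2$ at $t\in\{3,4\}$) does satisfy \eqref{eq:layer-full-slab}, since $V_1-V_0=H$ gives the slabs $d=1,2$ and their negatives give $d=4,3$. The gap is in the only step with real content. You never produce a third spectrum $V_2$ of $E$, and the candidate you name is not one. The set $\{(4a+6b,\,3c+4a)\}$ contains $(0,0)$ and $(6,0)$, so $(6,0)\in V_2-V_2$. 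However,
\[
  \wh{\one_E}(6,0)=\sum_{y\in\Z_{12}}(-1)^{f(y)}=8-4=4\neq0,
\]
because exactly four values of $f$ (at $y=1,3,4,8$) are odd. The class structure you want (two points of $C$ per class, with first coordinates differing by $2$ modulo $4$) is also only stated as a requirement, not derived. A tiling condition only says that $C-C$ avoids the small set $V_2-V_2$. Such a structure therefore has to be extracted from a specific quotient, and whether it exists depends entirely on how $V_2$ is chosen.

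The paper takes $V_2=H_0\cup\bigl((1,2)+H_0\bigr)$ with $H_0=\langle(4,2)\rangle$. This is a spectrum of $E$: with $r=2x+y\bmod 6$ and $s=x+2y\bmod12$, each $r\in\Z_6$ occurs twice on $E$ with $s$-values differing by $6$. Since $H_0\subseteq V_2-V_2$, a common complement $C=\{(x,g(x))\}$ injects into $\Z_4\times\Z_6$ under $\phi(x,y)=(x\bmod4,\ x+y\bmod6)$. Its image $P$ then satisfies $P+\{(0,0),(1,3)\}=\Z_4\times\Z_6$ directly. The paper shows that the even-count $e_r$ of the fiber over $r\in\Z_4$ is independent of $r$, so the number of even values of $x+g(x)\bmod6$ is divisible by $4$. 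But that number equals $6$, because $x+g(x)\equiv x-g(x)\pmod 2$ and $x-g(x)$ runs over $\Z_{12}$. Your intended count also works for this $V_2$. Since $(1,3)$ has order $4$, each coset of $\langle(1,3)\rangle$ meets $P$ in exactly two points, differing by $(2,0)$. Pulling back gives classes indexed by $y-2x\bmod6$, i.e.\ by $(x+y\bmod3,\ y\bmod2)$. Each class holds two points of $C$ whose first coordinates differ by $2$ modulo $4$, and your $a+b=3$, $a=b$ argument then finishes. Without an explicit $V_2$ and these verifications, however, the proposal does not prove the theorem.
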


\subsection{A seed spectral configuration with incompatible tiling complements}

Define
\begin{align*}
  V_0&=\{(0,k):k\in\Z_{12}\},\\
  V_1&=\{(k,k):k\in\Z_{12}\},\\
  V_2&=H_0\cup(c+H_0),
\end{align*}
where $H_0=\langle(4,2)\rangle$ and $c=(1,2)$.
Thus $\card{V_0}=\card{V_1}=\card{V_2}=12$.

\begin{lemma}\label{lem:three-spectra}
Each of $V_0$, $V_1$, and $V_2$ is a spectrum of $E$. Equivalently, $E$ is a spectrum of each of $V_0$, $V_1$, and $V_2$.
\end{lemma}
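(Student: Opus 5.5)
The plan is to verify directly that $V_j$ is a spectrum of $E$, using the difference criterion: since $\card{V_j}=\card{E}=12$, it suffices to show that
\[
  (V_j-V_j)\setminus\{0\}\subseteq\mathcal Z_E,\qquad
  \wh{\one_E}(u,v)=\sum_{y\in\Z_{12}}\zeta^{uf(y)+vy}.
\]
The equivalent formulation (that $E$ is a spectrum of each $V_j$) then follows from the symmetry \eqref{eq:spectral-symmetry}. I will handle the three sets separately, in increasing order of difficulty.

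For $V_0$ the nonzero differences are $(0,k)$ with $k\ne0$. Here $\wh{\one_E}(0,k)=\sum_y\zeta^{ky}=0$, because $E$ is a graph over its second coordinate. For $V_1$ the nonzero differences are $(k,k)$ with $k\ne0$. Then
\[
  \wh{\one_E}(k,k)=\sum_y\zeta^{k(f(y)+y)}=0,
\]
since by \eqref{eq:f-plus-y} the map $y\mapsto f(y)+y$ is a permutation of $\Z_{12}$.

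For $V_2$, note that $H_0=\langle(4,2)\rangle=\{(0,0),(4,2),(8,4),(0,6),(4,8),(8,10)\}$ has order $6$. Hence
\[
  V_2-V_2=H_0\cup(c+H_0)\cup(-c+H_0).
\]
Because $E$ has real-valued indicator, $\wh{\one_E}(-\xi)=\overline{\wh{\one_E}(\xi)}$. So the coset $-c+H_0$ is handled once $c+H_0$ is, and two claims remain:
\begin{enumerate}[label=(\alph*)]
  \item $\wh{\one_E}$ vanishes on $H_0\setminus\{0\}$;
  \item $\wh{\one_E}$ vanishes on all of $c+H_0$.
\end{enumerate}
Both reduce to Fourier analysis on the cyclic group $H_0$ via the homomorphism
\[
  \psi:H\to\Z_6,\qquad \psi(x,y)=2x+y \bmod 6,
\]
since $\chi_{(4,2)}(x,y)=\zeta^{4x+2y}=\ee^{2\pi i(2x+y)/6}$. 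Grouping $E$ into the fibres $E_r=E\cap\psi^{-1}(r)$, Fourier inversion on $\Z_6$ gives the following reformulations. Claim (a) is equivalent to $\card{E_r}=2$ for every $r\in\Z_6$. Claim (b) is equivalent to
\[
  \sum_{e\in E_r}\chi_c(e)=0\qquad\text{for every } r,
\]
where $\chi_c(x,y)=\zeta^{x+2y}$.

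Given (a), each fibre $E_r$ has exactly two points, so (b) says that within each fibre the values $x+2y$ differ by $6$ modulo $12$. The final step is to tabulate, for $y=0,\ldots,11$, the two quantities $2f(y)+y \bmod 6$ and $f(y)+2y\bmod 12$ from the given values of $f$. One then checks that each residue mod $6$ occurs exactly twice and that each such pair has $x+2y$ values differing by $6$. This finite table check is the only real computation. I expect it to be the main obstacle, in the sense that the whole lemma rests on it: if a fibre fails, the coset decomposition above is the wrong way to organise the check, and one would fall back on evaluating $\wh{\one_E}$ at the $17$ relevant nonzero frequencies directly.
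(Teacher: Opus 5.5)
Your proposal is correct and is essentially the paper's argument. The paper uses the same coordinates $r=2x+y \bmod 6$ and $s=x+2y\bmod 12$, and its two row-orthogonality cases ($\delta=\delta'$ with $k\ne k'$, and $\delta\ne\delta'$) are exactly your claims (a) and (b), phrased via the character matrix instead of the difference set $V_2-V_2$. The table check you defer does go through: for $r=0,\dots,5$ the $r$-fibres are $y\in\{0,10\},\{9,11\},\{2,6\},\{5,7\},\{4,8\},\{1,3\}$, with $s$-pairs $\{0,6\},\{2,8\},\{4,10\},\{0,6\},\{5,11\},\{1,7\}$, each differing by $6$.
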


\begin{proof}
Since the second coordinates of $E$ run through $\Z_{12}$, the character matrix indexed by $V_0$ is the Fourier matrix of order $12$. Thus $V_0$ is a spectrum of $E$.

For $V_1$, the relevant coordinate on a point $(f(y),y)$ is $f(y)+y$. By \eqref{eq:f-plus-y}, these values also run through $\Z_{12}$, so $V_1$ is a spectrum of $E$.

It remains to consider $V_2$. For $(x,y)\in E$, define
\[
  r=2x+y\pmod6,
  \qquad
  s=x+2y\pmod{12}.
\]
The two $s$-values in each $r$-fiber are
\begin{equation}\label{eq:r-s-table}
  \begin{array}{c|cccccc}
    r&0&1&2&3&4&5\\ \hline
    s&\{0,6\}&\{2,8\}&\{4,10\}&\{0,6\}&\{5,11\}&\{1,7\}.
  \end{array}
\end{equation}
Thus each $r\in\Z_6$ occurs twice, and the two corresponding $s$-values differ by $6$.

Write the elements of $V_2$ as
\[
  \lambda_{k,\delta}=k(4,2)+\delta(1,2),
  \qquad k\in\Z_6,
  \quad \delta\in\Z_2.
\]
Then
\[
  \lambda_{k,\delta}\cdot(x,y)=2kr+\delta s\pmod{12}.
\]
For each $r$, choose a representative $s_r$ from the corresponding pair in \eqref{eq:r-s-table}, and write the two values in that fiber as $s=s_r+6\varepsilon$, where $\varepsilon\in\Z_2$. With $\omega=\zeta^2=\ee^{2\pi i/6}$, the character matrix has entries
\[
  \omega^{kr}\zeta^{\delta s_r}(-1)^{\delta\varepsilon}.
\]
Consider two rows indexed by $(k,\delta)$ and $(k',\delta')$. If $\delta\ne\delta'$, summation over $\varepsilon\in\Z_2$ gives zero. If $\delta=\delta'$ but $k\ne k'$, summation over $\varepsilon$ gives a factor $2$, while
\[
  \sum_{r\in\Z_6}\omega^{(k-k')r}=0.
\]
Hence the twelve rows are pairwise orthogonal, so $V_2$ is a spectrum of $E$. The assertions with $E$ as the spectrum follow from \eqref{eq:spectral-symmetry}.
\end{proof}

\begin{proposition}\label{prop:no-common-complement}
There is no set $C\subseteq H$ such that
\[
  V_0+C=V_1+C=V_2+C=H.
\]
\end{proposition}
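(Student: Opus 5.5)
The plan is to mirror the parity-counting argument of \Cref{prop:no-common-spectrum}, now on the physical side. Suppose $C\subseteq H$ satisfies $V_0+C=V_1+C=V_2+C=H$; then $\card{C}=12$. First I will translate the tilings by $V_0$ and $V_1$ into combinatorial constraints on $C$.

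\emph{The $V_0$- and $V_1$-conditions.} Since $V_0=\{0\}\times\Z_{12}$, the condition $V_0+C=H$ says that each column $\{x\}\times\Z_{12}$ contains exactly one point of $C$. Hence $C=\{(x,g(x)):x\in\Z_{12}\}$ is a graph over the first coordinate. Similarly, $V_1+C=H$ says that $C$ meets each coset of the diagonal once. Equivalently, the values $x-g(x)$ run through all of $\Z_{12}$.

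\emph{Passing to the quotient $H/H_0$.} Since $V_2=H_0+\{0,c\}$, the tiling $V_2+C=H$ means two things: the map $H\to H/H_0$ is injective on $C$, and the image $D$ of $C$ satisfies $D\sqcup(D+\bar c)=H/H_0$. To make this concrete I will use the homomorphism
\[
  \psi:H\to\Z_{12}\times\Z_2,\qquad \psi(x,y)=(x-2y\bmod 12,\ y\bmod 2).
\]
A short check shows that $\ker\psi=H_0=\langle(4,2)\rangle$ and that $\psi$ is onto, since both groups have the right orders. It also gives $\psi(c)=(9,0)$. Because $9$ generates the subgroup $\langle 3\rangle$ of order $4$ in $\Z_{12}$, the partition condition decouples over the cosets $z+\langle3\rangle$ in each $\Z_2$-fiber. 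On the $4$-cycle $z\to z+9\to z+6\to z+3\to z$, a set $D$ with $D\sqcup(D+9)$ equal to the whole cycle must be an alternating pair: either $\{z,z+6\}$ or $\{z+3,z+9\}$.

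\emph{Parity count.} Put $w=x-2g(x)$ and $\gamma=g(x)\bmod 2$, and split $C$ into six classes according to $(w\bmod 3,\gamma)$. By the previous step, each class has exactly two elements, and their $w$-values differ by $6$. In particular the two $w$-values have the same parity. Since $w\equiv x\pmod 2$, the two $x$-coordinates in each class have the same parity. Let $a$ (respectively $b$) be the number of classes with $\gamma=0$ (respectively $\gamma=1$) whose common $x$-parity is odd. The $x$'s exhaust $\Z_{12}$, so exactly six are odd, which gives $a+b=3$. The values $x-g(x)$ also exhaust $\Z_{12}$. The parity of $x-g(x)$ equals that of $x$ when $\gamma=0$ and is opposite when $\gamma=1$. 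So six odd values force $a+(3-b)=3$, that is, $a=b$. This is incompatible with $a+b=3$.

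The only delicate step is the quotient analysis. One must verify that $\ker\psi=H_0$ exactly and correctly identify the $\langle 9\rangle$-orbits. Beyond that, one must make sure that the "exactly two per class, same parity" conclusion uses both the injectivity of $\psi$ on $C$ and the disjoint-cover condition. Once that is in place, the counting is identical to \Cref{prop:no-common-spectrum}.
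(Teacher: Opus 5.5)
Your proof is correct. It shares the paper's skeleton but reaches the contradiction by a different counting mechanism. Both arguments use $V_0$ to write $C=\{(x,g(x))\}$, use $V_1$ to make $x-g(x)$ a permutation, and push $V_2+C=H$ down to $H/H_0$.

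The paper identifies $H/H_0$ with $\Z_4\times\Z_6$ via $(x,y)\mapsto(x\bmod4,\,x+y\bmod6)$, so that $\bar c=(1,3)$. It reads the two-point tiling fiberwise over $\Z_4$ as $S_r\,\dot\cup\,(S_{r-1}+3)=\Z_6$ with $\card{S_r}=3$. It then tracks only the number $e_r$ of even elements of each fiber. Translation by $3$ flips parity, so $e_r=e_{r-1}$, and the number of even values of $x+g(x)$ is divisible by $4$. Since $x+g(x)$ has the parity of $x-g(x)$, the permutation property makes this number $6$, a contradiction.

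You instead work in $\Z_{12}\times\Z_2$, where $\bar c=(9,0)$. You split the quotient into the six $\langle\bar c\rangle$-orbits and determine $D$ exactly on each orbit as an antipodal pair. Then you rerun the $a+b=3$, $a=b$ count of \Cref{prop:no-common-spectrum}, using the parities of both $x$ and $x-g(x)$. Your computations check out: $\ker\psi=\{(2y,y):y\text{ even}\}=H_0$, $\psi(c)=(9,0)$, and the $4$-cycle $z\to z+9\to z+6\to z+3$ forces $\{z,z+6\}$ or $\{z+3,z+9\}$.

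Your route costs one extra structural step, the orbit classification. In return it makes the duality between the two seed propositions transparent. Your classes $(w\bmod3,\gamma)$ correspond to the frequency-side classes $(u+v\bmod3,\,v\bmod2)$, and your difference-$6$ pairs correspond to the pairs with $u$-difference $\equiv2\pmod4$. The paper's version is more economical: it needs only fiber sizes and a one-line recurrence, with no explicit description of $D$.
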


\begin{proof}
Assume that such a set $C$ exists. Necessarily $\card{C}=12$. Since $V_0$ is the vertical subgroup, $V_0+C=H$ implies that the first coordinates of the points in $C$ are all distinct. Hence
\[
  C=\{(x,g(x)):x\in\Z_{12}\}
\]
for some function $g:\Z_{12}\to\Z_{12}$. Since $V_1$ is the diagonal subgroup, $V_1+C=H$ implies that
\[
  h(x)=x-g(x)
\]
is a permutation of $\Z_{12}$.

Consider the homomorphism
\[
  \phi:H\longrightarrow\Z_4\times\Z_6,
  \qquad
  \phi(x,y)=(x\bmod4,\ x+y\bmod6).
\]
Its kernel is $H_0=\langle(4,2)\rangle$, and $\phi(c)=(1,3)$. Put $P=\phi(C)$. The restriction $\phi|_C$ is injective: if $\phi(c_1)=\phi(c_2)$, then
\[
  c_1-c_2\in H_0\subseteq V_2-V_2
  \quad\text{and}\quad
  c_1-c_2\in C-C,
\]
so the directness of $V_2+C=H$ and \eqref{eq:tile-difference} force $c_1=c_2$. Hence $\card P=\card C=12$. Applying $\phi$ to $V_2+C=H$ gives the set-theoretic covering
\[
  \phi(V_2)+P=\Z_4\times\Z_6.
\]
Here $\phi(V_2)=\{(0,0),(1,3)\}$, and
\[
  \card{\phi(V_2)}\card P=2\cdot12=24=\card{\Z_4\times\Z_6}.
\]
Therefore the addition map is bijective, so the quotient factorization is direct:
\begin{equation}\label{eq:P-two-tile}
  P+\{(0,0),(1,3)\}=\Z_4\times\Z_6.
\end{equation}

For $r\in\Z_4$, let
\[
  S_r=\{s\in\Z_6:(r,s)\in P\}.
\]
The first coordinates of $C$ run through $\Z_{12}$, so each $S_r$ has three elements. Equation \eqref{eq:P-two-tile} yields
\[
  S_r\ \dot\cup\ (S_{r-1}+3)=\Z_6.
\]
Let $e_r$ denote the number of even elements of $S_r$. Translation by $3$ reverses parity, and $\Z_6$ has three even elements. Therefore
\[
  e_r+(3-e_{r-1})=3,
  \qquad e_r=e_{r-1}.
\]
All four values $e_r$ are equal, so the number of points in $P$ whose second coordinate is even is divisible by $4$.

On the other hand, the second coordinate of $\phi(x,g(x))$ is $x+g(x)$ modulo $6$. The integers $x+g(x)$ and $x-g(x)=h(x)$ have the same parity. Since $h$ is a permutation of $\Z_{12}$, exactly six of these values are even. The same quantity is therefore both divisible by $4$ and equal to $6$, a contradiction.
\end{proof}

\subsection{Five-layer amplification and passage to the plane}

Define
\[
  \widetilde A_+
  =(V_0\times\{0,1,2\})
  \cup(V_1\times\{4\})
  \cup(V_2\times\{3\})
\]
and
\[
  \Gamma=E\times\Z_5\subseteq\wh{K}.
\]

\begin{proposition}\label{prop:Atilde-plus}
The set $\Gamma$ is a spectrum of $\widetilde A_+$, but $\widetilde A_+$ does not tile $K$.
\end{proposition}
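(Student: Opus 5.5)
The plan is to realize $\widetilde A_+$ as an instance of \Cref{lem:layer-common-complement} with $q=5$, $r=3$, $m=12$, and then check that lemma's hypotheses one at a time. Take the layer assignment
\[
  \jmath(0)=\jmath(1)=\jmath(2)=0,\qquad \jmath(3)=2,\qquad \jmath(4)=1 .
\]
This map is surjective onto $\{0,1,2\}$, and with it $\widetilde A_+=\bigcup_{t\in\Z_5}(V_{\jmath(t)}\times\{t\})$.

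\emph{Spectrality.} By \Cref{lem:three-spectra}, the $12$-point set $E$, viewed in $\wh H$, is a spectrum of each of $V_0$, $V_1$ and $V_2$. The first conclusion of \Cref{lem:layer-common-complement} then says directly that $E\times\wh{\Z_5}=\Gamma$ is a spectrum of $\widetilde A_+$. No further computation is needed for this half.

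\emph{Non-tiling.} \Cref{prop:no-common-complement} already says that $V_0,V_1,V_2$ have no common tiling complement in $H$. So the only hypothesis left to verify is the full-slab condition \eqref{eq:layer-full-slab}:
\[
  H\times(\Z_5\setminus\{0\})\subseteq\widetilde A_+-\widetilde A_+ .
\]
I would obtain this from the vertical layers $t\in\{0,1,2\}$ and the diagonal layer $t=4$ alone.

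First I show $V_1-V_0=H$. The subgroups $V_0$ and $V_1$ intersect only in $0$ and have $12\cdot12=144=\card H$ elements combined in the product, so $V_1+V_0=H$. Since $V_0=-V_0$, this gives $V_1-V_0=H$, and by negation also $V_0-V_1=H$.

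Next I check that every nonzero last coordinate is reached. Differences of the form (layer $4$) minus (layers $0,1,2$) have last coordinate $4,3,2$, and their $H$-part ranges over $V_1-V_0=H$. Differences of the form (layers $0,1,2$) minus (layer $4$) have last coordinate $1,2,3$, and their $H$-part ranges over $V_0-V_1=H$. Together these cover every $d\in\{1,2,3,4\}$, which gives \eqref{eq:layer-full-slab}.

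With this in hand, the second conclusion of \Cref{lem:layer-common-complement} shows that $\widetilde A_+$ does not tile $K$. I do not expect a real obstacle. The only point needing care is the slab verification, and it is settled by the observation that $V_0$ and $V_1$ are complementary subgroups; layer $3$, which carries $V_2$, is never used there.
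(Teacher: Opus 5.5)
Your proposal is correct and follows the paper's proof essentially step for step. It uses the same layer assignment and obtains spectrality directly from \Cref{lem:three-spectra} and the first half of \Cref{lem:layer-common-complement}. It gets non-tiling from the full-slab condition built from layers $\{0,1,2\}$ and $4$, together with \Cref{prop:no-common-complement}. The only cosmetic difference is that you derive $V_0-V_1=H$ from $V_0\cap V_1=\{0\}$ plus a cardinality count, where the paper writes the explicit decomposition $(a,b)=(0,b-a)-(-a,-a)$.
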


\begin{proof}
The spectral assertion is the first conclusion of \Cref{lem:layer-common-complement}. Indeed, \Cref{lem:three-spectra} says that $E$ is a spectrum of each of $V_0,V_1,V_2$, and the layer assignment is
\[
  \jmath(0)=\jmath(1)=\jmath(2)=0,
  \qquad
  \jmath(3)=2,
  \qquad
  \jmath(4)=1.
\]
Thus $\Gamma=E\times\Z_5$ is a spectrum of $\widetilde A_+$.

We verify the difference-set condition needed for the second conclusion of that lemma. First,
\begin{equation}\label{eq:V0-minus-V1}
  V_0-V_1=H,
\end{equation}
because
\[
  (a,b)=(0,b-a)-(-a,-a)
\]
for every $(a,b)\in H$. The $V_0$-layers are $\{0,1,2\}$ and the $V_1$-layer is $\{4\}$, while
\[
  (\{0,1,2\}-4)\cup(4-\{0,1,2\})
  =\Z_5\setminus\{0\}.
\]
Combining this identity with \eqref{eq:V0-minus-V1} gives
\begin{equation}\label{eq:full-nonzero-slabs}
  H\times(\Z_5\setminus\{0\})
  \subseteq\widetilde A_+-\widetilde A_+.
\end{equation}
By \Cref{prop:no-common-complement}, the sets $V_0,V_1,V_2$ have no common tiling complement. The second conclusion of \Cref{lem:layer-common-complement}, applied with \eqref{eq:full-nonzero-slabs}, now shows that $\widetilde A_+$ does not tile $K$.
\end{proof}

Apply the Chinese remainder isomorphism \eqref{eq:crt-physical} to $\widetilde A_+$ and define
\[
  A_+=\Phi(\widetilde A_+).
\]
Explicitly,
\begin{align*}
  A_+={}&\{(36t,k):t\in\{0,1,2\},\ k\in\Z_{12}\}\\
  &\cup\{(25k+24,k):k\in\Z_{12}\}\\
  &\cup\{(25u+48,v):(u,v)\in V_2\}.
\end{align*}
The dual isomorphism associated with \eqref{eq:crt-physical} is
\begin{equation}\label{eq:crt-dual}
  (u,v,r)\longmapsto(5u+12r\bmod60,\ v\bmod12).
\end{equation}
Indeed, if $X=25x+36t$, then
\[
  \frac{(5u+12r)X}{60}
  \equiv\frac{ux}{12}+\frac{rt}{5}\pmod1.
\]
Consequently, the image of $\Gamma=E\times\Z_5$ is
\begin{equation}\label{eq:Lambda-plus-explicit}
  \Lambda_+
  =\{(5f(y)+12r,\ y):y\in\Z_{12},\ r\in\Z_5\}
  \subseteq\wh{\mathcal G}.
\end{equation}

\begin{proof}[Proof of \Cref{thm:main-spectral}]
By \Cref{prop:Atilde-plus} and the Chinese remainder isomorphisms \eqref{eq:crt-physical} and \eqref{eq:crt-dual}, the set $\Lambda_+$ in \eqref{eq:Lambda-plus-explicit} is a spectrum of $A_+$, whereas $A_+$ does not tile $\mathcal G$.
\end{proof}

\begin{corollary}\label{cor:plane-spectral}
The set
\[
  \Omega_+^{(97)}=A_+[97]+[0,1)^2
\]
is a bounded spectral set in $\R^2$ that does not tile by translations. It is the disjoint union of $564{,}540$ half-open unit squares.
\end{corollary}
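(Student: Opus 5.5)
We apply \Cref{prop:transference} to $A=A_+\subseteq\mathcal G=\Z_{60}\times\Z_{12}$, so $n_1=60$, $n_2=12$ and $\card{A_+}=60$. There are three steps. First, spectrality of $A_+[97]$ in $\Z^2$. Second, non-tiling of $A_+[97]$ in $\Z^2$. Third, passage to $\R^2$ via part~(iii), followed by the square count.

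For spectrality, we avoid the threshold $k_{\mathrm{sp}}$ entirely, since it is needed only for the converse direction. By \Cref{thm:main-spectral}, $\Lambda_+$ is a spectrum of $A_+$. The explicit forward construction \eqref{eq:explicit-spectrum-lift} works for every $k\ge1$, so $\Lambda_+[97]\subset\T^2$ is a spectrum of $A_+[97]$. Part~(iii) then shows that $\Lambda_+[97]+\Z^2$ is a spectrum of $\Omega_+^{(97)}=A_+[97]+[0,1)^2$. If anything needs care here, it is only checking that the factorization of the Fourier sum over $A_+ +Q_{97}$ into the sum over $A_+$ and two geometric sums really does not depend on $k$. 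This holds because the factorization is exact and the geometric sums $\sum_{j<k}\ee^{2\pi i (s-s')j/k}$ vanish for $s\ne s'$.

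For non-tiling, \Cref{thm:main-spectral} says $A_+$ does not tile $\mathcal G$. The second assertion of part~(ii) therefore gives that $A_+[k]$ does not tile $\Z^2$ whenever
\[
  k>\max\Bigl\{2,\frac{8\cdot60\cdot12}{60}\Bigr\}=\max\{2,96\}=96 .
\]
So $k=97$ is the smallest admissible choice, and this explains the number $97$. The tiling equivalence in part~(iii) then transfers non-tiling to $\Omega_+^{(97)}$ in $\R^2$. Boundedness is clear, since $A_+[97]$ is a finite subset of $[0,97\cdot60)\times[0,97\cdot12)$.

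It remains to count the squares. The translates $A_+ + (60j_1,12j_2)$ for $0\le j_1,j_2<97$ are pairwise disjoint, because the representatives of $A_+$ lie in $[0,60)\times[0,12)$. Hence $\card{A_+[97]}=60\cdot97^2=60\cdot9409=564{,}540$. Distinct lattice points give disjoint half-open unit squares, so $\Omega_+^{(97)}$ is a disjoint union of exactly that many squares. None of these steps is a real obstacle, since all the work is contained in \Cref{prop:transference} and \Cref{thm:main-spectral}. The only delicate point is that the spectral direction must use the explicit lift valid for all $k$, not the threshold $k_{\mathrm{sp}}$; otherwise we could not pin down the specific value $k=97$.
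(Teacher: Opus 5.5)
Your proposal is correct and follows the paper's proof essentially step for step. You obtain spectrality of $A_+[97]$ from the explicit forward lift, which is valid for every $k\ge1$; you obtain non-tiling from part~(ii) with the bound $8\cdot60\cdot12/60=96$; you transfer both properties to $\Omega_+^{(97)}$ via part~(iii); and you count $60\cdot97^2=564{,}540$ squares, including the paper's own point that the threshold $k_{\mathrm{sp}}$ must be bypassed on the spectral side.
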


\begin{proof}
For $A_+\subset\Z_{60}\times\Z_{12}$, the quantitative bound in \Cref{prop:transference} is
\[
  \frac{8\cdot60\cdot12}{\card{A_+}}=
  \frac{8\cdot60\cdot12}{60}=96.
\]
Since $A_+$ does not tile the finite group, $A_+[97]$ does not tile $\Z^2$. On the other hand, $A_+$ is spectral, and the forward construction \eqref{eq:explicit-spectrum-lift} is valid for every $k\ge1$. Hence $A_+[97]$ is spectral, with spectrum
\begin{equation}\label{eq:Lambda-plus-lattice}
  \left\{
  \left(\frac{U+s/97}{60},
        \frac{W+t/97}{12}\right):
  (U,W)\in\Lambda_+,\ 0\le s,t<97
  \right\}.
\end{equation}
Part~\textnormal{(iii)} of \Cref{prop:transference} now shows that $\Omega_+^{(97)}$ is spectral and does not tile $\R^2$; a Euclidean spectrum is obtained by adding $\Z^2$ to the set in \eqref{eq:Lambda-plus-lattice}. Finally,
\[
  \card{A_+[97]}=\card{A_+}\,97^2=60\cdot97^2=564{,}540,
\]
which is the number of unit squares.
\end{proof}

\begin{proof}[Proof of \Cref{main-thm}]
The two assertions follow from \Cref{cor:plane-tile,cor:plane-spectral}.
\end{proof}

\section*{Acknowledgements}
Tao Zhang is partially supported by the National Natural Science Foundation of China (Grant No.~12571357) and the Natural Science Basic Research Program of Shaanxi (Program No.~2025JC-YBMS-048).

\end{document}